\title[]{Modified K\"ahler-Ricci flow on projective bundles}
\author[]{Ryosuke Takahashi}
\address{Graduate School of Mathematics, Nagoya University, Furo-cho, Chikusa-ku, Nagoya, 464-8602, Japan}
\email{m11036a@math.nagoya-u.ac.jp}
\keywords{K\"ahler-Ricci soliton, K\"ahler-Ricci flow, projective bundle}
\thanks{This is the author's accepted version. The final publication is available at Springer via http://dx.doi.org/10.1007/s00209-015-1491-y.}
\subjclass[2010]{53C25}
\theoremstyle{definition}
\newtheorem{definition}{Definition}[section]
\newtheorem*{acknowledgements}{Acknowledgements}
\theoremstyle{plain}
\newtheorem{theorem}{Theorem}[section]
\newtheorem{lemma}{Lemma}[section]
\newtheorem{corollary}{Corollary}[section]
\newtheorem*{maintheorem}{Main theorem}
\theoremstyle{remark}
\newtheorem{remark}{Remark}[section]
\newtheorem{example}{Example}[section]
\begin{document}
\begin{abstract}
On a compact  K\"ahler manifold, a  K\"ahler metric $\omega$ is called Generalized Quasi-Einstein(GQE) if it satisfies the equation 
${\rm Ric} (\omega) - {\mathbb H}{\rm Ric} (\omega) = L_X \omega$ 
for some holomorphic vector field $X$, where ${\mathbb H}{\rm Ric} (\omega)$ denotes the harmonic representative of the Ricci form ${\rm Ric} (\omega)$.
GQE metrics are one of the self-similar solutions of the modified K\"ahler-Ricci flow: 
$\frac{\partial \omega_t}{\partial t} = -{\rm Ric}(\omega_t) + {\mathbb H} {\rm Ric}(\omega _t)$.
In this paper, we propose a method of studying the modified K\"ahler-Ricci flow on special projective bundles, called admissible bundles, from the view point of symplectic geometry. As a result, we can reduce the modified K\"ahler-Ricci flow to a simple PDE with one space variable. Moreover, we study the limiting behavior of the solution in some special cases. 
\end{abstract}
\maketitle
\tableofcontents
\section{Introduction}
\label{sect:1}
In K{\"a}hler geometry, K{\"a}hler-Einstein metrics are closely related to various types of stabilities, which have been studied by many experts. In order to find K{\"a}hler-Einstein metrics, Tian and Zhu \cite{TZ07} studied the following K{\"a}hler-Ricci flow on an $m$-dimensional Fano manifold $M$:
\begin{equation}
\frac{\partial \omega_t}{\partial t} = - {\rm Ric}(\omega_t) + \omega_t \label{eq:1.1} ,
\end{equation}
where $\omega_t$ is a $t$-dependent K{\"a}hler form and ${\rm Ric}(\omega_t)$ is its Ricci form, which are given by
\[
\begin{cases}
g_{i \bar{j}} = g \left( \frac{\partial}{\partial w^i }, \frac{\partial}{\partial w^{\bar j} } \right) \\
\omega = \sqrt{-1} \sum_{i, j} g_{i \bar{j}} dw^i \wedge dw^{\bar j}
\end{cases}
\]
and
\[
\begin{cases}
r_{i \bar{j}} = - \partial_i \partial_{\bar j} \log ( \det ( g_{k \bar{l}}) ) \\
{\rm Ric} (\omega) = \sqrt{-1} \sum_{i, j} r_{i \bar{j}} dw^i \wedge dw^{\bar j}
\end{cases}
\]
in local holomorphic coordinates $( w^1, \cdots , w^m )$. We assume that the initial metric $\omega_0$ is in $ 2 \pi c_1 (M)$. Then we have $\omega_t \in 2 \pi c_1 (M)$ under the evolution equation \eqref{eq:1.1}.

A K{\"a}hler metric $g$ is called a K{\"a}hler-Ricci soliton if its K{\"a}hler form $\omega \in 2 \pi c_1(M)$ satisfies the equation
\[
{\rm Ric} (\omega ) - \omega = L_X \omega ,
\]
where $L_X$ denotes the Lie derivative with respect to a holomorphic vector field $X$ on $M$. As usual, we denote a K{\"a}hler-Ricci soliton by a pair $(\omega, X)$. If $X = 0$ , this is just a K{\"a}hler-Einstein metric. K{\"a}hler-Ricci solitons are one of the self-similar solutions of K{\"a}hler-Ricci flow. Actually, if we put $\omega_t = ( \exp{(-{\rm Re}(X) \cdot t} ))^* \omega_0$ for any K{\"a}hler-Ricci soliton $( \omega_0, X)$, then $\omega_t$ satisfies the evolution equation \eqref{eq:1.1}. Tian-Zhu \cite{TZ07} proved that if $M$ admits a K{\"a}hler-Ricci soliton $(\omega, X)$ and the initial K{\"a}hler metric is invariant under the action of the one-parameter subgroup generated by ${\rm Im} (X)$, any solution of K{\"a}hler-Ricci flow \eqref{eq:1.1} will converge to the K{\"a}hler-Ricci soliton $(\omega, X)$ in the sense of Cheeger-Gromov.
They also defined a new holomorphic invariant \cite{TZ02}, which is an obstruction to the existence of K{\"a}hler-Ricci solitons  just as the Futaki invariant \cite{Fut83} is an obstruction to the existence of K{\"a}hler-Einstein metrics. By integrating this invariant, they constructed the modified K-energy, which is a functional defined over the space of K{\"a}hler metrics, and convex along geodesics. Then K{\"a}hler-Ricci solitons are cirtical points of this functional. It is conjectured that the existence of a K{\"a}hler-Ricci soliton is equivalent to the strong properness (or coercivity) of the modified K-energy. We can treat all of these materials collectively in Geometric Invariant Theory (GIT), and explain on a formal level why the existence of a K\"ahler-Einstein metric is related to the convergence of K\"ahler-Ricci flow and the coercivity of K-energy (cf. \cite{Don97}). Moerover, on the basis of Hilbert-Mumford criterion in GIT, Donaldson \cite{Don02} introduced a notion of algebro-geometric stability, called K-polystability. By resent works of Chen-Donaldson-Sun \cite{CDS15} and Tian \cite{Tian15}, it was shown that a Fano manifold admits a K\"ahler-Einstein metric if and only if it is K-polystable.

For any polarized manifold, we can give a straightforward extension of K{\"a}hler-Ricci solitons. Let $M$ be a compact K{\"a}hler manifold and $\Omega$ a K{\"a}hler class on $M$. A K{\"a}hler metric $g$ is called a Generalized Quasi-Einstein (GQE) K{\"a}hler metric if its K{\"a}hler form $\omega \in \Omega$ satisfies the equation
\[
{\rm Ric} (\omega) - {\mathbb H}{\rm Ric} (\omega) = L_X \omega ,
\]
where ${\mathbb H}{\rm Ric} (\omega)$ is the harmonic representative of the Ricci form ${\rm Ric} (\omega)$, and $X$ is a holomorphic vector field on $M$. If $X = 0$ , this is just a constant scalar curvature (CSC) K{\"a}hler metric.
Examples of GQE metrics were calculated in \cite{Guan95} and \cite{MT11}, however, the relations between the existence of GQE metrics and stabilities have not been found.

From the above reasons, we want to extend Tian-Zhu's result \cite{TZ07} to the convergence theorem of the modified K{\"a}hler-Ricci flow introduced by Guan \cite{Guan07}:
\begin{equation}
\frac{\partial \omega_t}{\partial t} = -{\rm Ric}(\omega_t) + {\mathbb H} {\rm Ric}(\omega _t) \label{eq:1.2}.
\end{equation}
By a simple calculation, one can check that the evolution equation \eqref{eq:1.2} generalizes \eqref{eq:1.1} for any polarizations, and GQE metrics are one of the self-similar solutions of \eqref{eq:1.2}.
As is the case with K\"ahler-Ricci solitons, it is expected that if we assume that $M$ admits a GQE metric $(\omega , X)$ and the initial K{\"a}hler metric is invariant under the action of the one-parameter subgroup generated by ${\rm Im} (X)$, the long time solution of \eqref{eq:1.2} exists, and will converge to the GQE metric $(\omega, X)$ in the sense of Cheeger-Gromov.

In this paper, we study the evolution equation \eqref{eq:1.2} in a special case: we study \eqref{eq:1.2} on an admissible bundle (cf. \cite{ACGT08}), which is the total space of fiberwise projectification of the direct sum of two projectively-flat holomorphic vector bundles over a compact K{\"a}hler manifold. We assume that $\Omega$ is an admissible K{\"a}hler class whose corresponding polynomial $P(t)$ has exactly one root in the interval $(-1,1)$ and the initial K{\"a}hler metric is an admissible K{\"a}hler metric in $\Omega$. Then the admissible condition is preserved under the flow and \eqref{eq:1.2} can be reduced to the evolution equation
\begin{equation}
\begin{split}
2 \Theta_{\infty} \frac{d \varphi_t}{dt} &= \Theta_{\infty} \Theta_t \varphi _t'' - (\Theta_{\infty} \varphi_t ' )^2 + \frac{P}{p_c} \cdot \Theta_{\infty} \varphi_t ' \\
&+ \left( \Theta_{\infty} \left( \frac{P}{p_c} \right) '  - \left( \frac{P}{p_c} \right) \Theta_{\infty} ' \right) (1+ \varphi_t) \varphi_t \label{eq:1.3}
\end{split}
\end{equation}
for a $t$-dependent smooth function $\varphi_t$ on the interval $[-1,1]$ defined by $\Theta_t=(1+\varphi_t) \Theta_{\infty}$, where $\Theta_t$ (resp. $\Theta_{\infty}$) denotes the function on $[-1,1]$ corresponding to the admissible metric $\omega_t$ (resp. the admissible GQE metric) in $\Omega$. The crucial point is that the evolution equation \eqref{eq:1.3} is basically a heat equation with one space variable. Such a type of equation was first studied by Koiso \cite{Koi90} in the case of anti-canonical polarizations. Thereafter, Guan \cite{Guan07} studied \eqref{eq:1.3} for general polarizations and showed that any convergent solution of \eqref{eq:1.3} decays to $0$ exponentially (on the level of functions defined on $[-1,1]$) under the assumption:
\begin{equation}
\Theta_{\infty} \left( \frac{P}{p_c} \right) '  - \left( \frac{P}{p_c} \right) \Theta_{\infty} ' <0 \;\; \text{on $[-1,1]$}. \label{eq:1.4}
\end{equation}
Actually, the condition \eqref{eq:1.4} is automatically satisfied when $\Omega=2 \pi c_1(M)$, and one can check whether $\Omega$ satisfies the condition \eqref{eq:1.4} or not for many concrete cases. However, it is hard to check for all our cases since $\frac{P}{p_c}$ is not a product of linear factors in general. Our plan is to study  the asymptotic behavior of $P(t)$ as the admissible data of $\Omega$ tends to $0$ and show that the condition \eqref{eq:1.4} is automatically satisfied when the admissible data of $\Omega$ is sufficiently small. Combining with Guan's result, we obtain the following:

\begin{maintheorem}[Theorem~\ref{the:5.2}]
Let $M$ be an $m \left( := \sum_{a \in \hat{\mathcal A}} d_a +1 \right)$-dimensional admissible bundle and $\Omega$ an admissible class on $M$ with the admissible data $\{x_a\}$. We assume that $P(t)$ has exactly one root in the interval $(-1,1)$. Then for any symplectic form defined by \eqref{eq:3.3}, the modified K{\"a}hler-Ricci flow \eqref{eq:1.2} can be reduced to the evolution equation \eqref{eq:1.3} for $\varphi_t$. Moreover, if $|x_a|$ is sufficiently small for all $a \in {\mathcal A}$, any convergent solution $\varphi_t$ of \eqref{eq:1.3} decays to $0$ in exponential order.
\end{maintheorem}
The above theorem agrees with Maschler-T{\o}nnesen's result \cite{MT11}, which says that there exists a unique admissible GQE metric if the admissible data of $\Omega$ is sufficiently small.

Now we describe the content of this paper. In Section \ref{sect:2}, we review the modified K{\"a}hler-Ricci flow studied in \cite{Guan07} and give a few remarks. In Section \ref{sect:3}, we review the fundamental materials about admissible bundles \cite{ACGT08} and define some notations that we will use later. In Section \ref{sect:4}, we relate Maschler-T{\o}nnesen's invariant \cite{MT11} to Tian-Zhu's invariant \cite{TZ02} on admissible bundles (cf. Theorem \ref{the:4.2}). In the course of the proof of Theorem \ref{the:4.2}, the author found the relation between two functions $P$ and $p_c$. Using this relation, we can show that there exists an admissible K\"ahler-Ricci soliton on any admissible bundles polarized by the anti-canonical bundle (cf. Lemma \ref{lem:5.2}), which generalizes the existence result of  a K\"ahler-Ricci soliton on a certain ${\mathbb CP}^1$-bundle obtained by Tian-Zhu \cite{TZ02} to the case when the dimension of the fiber is greater than 1 (cf. Example \ref{exa:5.1}). Lastly, we propose a method of studying the modified K{\"a}hler-Ricci flow on admissible bundles from a symplectic point of view considering the transformations given by $U(1)$-equivariant fiber-preserving diffeomorphisms. By this method, we can regard the evolution equation \eqref{eq:1.2} as a ``symplectic version'' of the modified K{\"a}hler-Ricci flow defined in the moduli space of K{\"a}hler metrics.
\begin{acknowledgements}
The author would like to express his gratitude to Professor Ryoichi Kobayashi for his advice on this article, and to the referee for useful suggestions that helped him to improve the original manuscript. The author is supported by Grant-in-Aid for JSPS Fellows Number 25-3077.
\end{acknowledgements}
\section{Modified K{\"a}hler-Ricci flow}
\label{sect:2}
Let $M$ be an $m$-dimensional compact K{\"a}hler manifold and $\Omega$ a K{\"a}hler class on $M$. We consider the following evolution equation:
\begin{equation}
\frac{\partial \omega_t}{\partial t} = -{\rm Ric}(\omega_t) + {\mathbb H} {\rm Ric}(\omega _t) \label{eq:2.1} ,
\end{equation}
where $\omega_t = \sqrt{-1} g_{t \: i \bar{j}} dz^i \wedge dz^{\bar j} \in \Omega$ is a $t$-dependent K{\"a}hler form and ${\mathbb H} {\rm Ric}(\omega _t) = \sqrt{-1} \gamma_{t \: i \bar{j}} dz^i \wedge dz^{\bar j} \in 2 \pi c_1(M)$ is the harmonic representative of the Ricci form ${\rm Ric}(\omega_t) = - \sqrt{-1} \partial \bar{\partial} \log \det g_t = \sqrt{-1} r_{t \: i \bar{j}} dz^i \wedge dz^{\bar j} \in 2 \pi c_1 (M)$.
This is called the modified K{\"a}hler-Ricci flow, which was first introduced in \cite[Section 11]{Guan07}. Because $\frac{\partial [ \omega_t ]}{\partial t} = -2 \pi c_1(M) + 2\pi c_1(M) = 0$, it is clear that if the initial metric $\omega_0$ is in  $\Omega$, then $\omega_t \in \Omega$ for all $t$. Thus the cohomology class of the initial metric is preserved under \eqref{eq:2.1}. If a long time solution of \eqref{eq:2.1} exists and converges to some K{\"a}hler metric, it must be a constant scalar curvature (CSC) K{\"a}hler metric.

However, it is difficult to estimate the behavior of ${\mathbb H} {\rm Ric}(\omega _t)$.
Hence we will study the contraction typed flow instead of \eqref{eq:2.1}:
\begin{equation}
\frac{\partial}{\partial t} {\rm log} \det g_t = -{\rm Scal}(g_t) + \overline{\rm Scal} \label{eq:2.2} ,
\end{equation}
where ${\rm Scal}(g_t) = r_i ^i$ is the scalar curvature of the K{\"a}hler metric $g_t$ and $\overline{\rm Scal} = \gamma_i^i=\frac{ 2 \pi m \cdot c_1(M) \cdot \Omega^{m-1}}{\Omega^m}$ is the average of the scalar curvature.
\begin{remark}
We call ${\rm log} \det g_t $ a \textbf{local Ricci potential}, which is defined in each local coordinate neighborhood. Let $(w^1, \cdots , w^m)$ and $(\tilde{w}^1, \cdots , \tilde{w}^m)$ be $t$-independent local holomorphic coordinate systems. We define the transition map $h = (h^1, \cdots , h^m)$ by $w^i = h^i (\tilde{w}^1, \cdots , \tilde{w}^m)$ for $i = 1, \cdots , m$. If we put $g_{i \bar{j}} = g \left( \frac{\partial}{\partial w^i }, \frac{\partial}{\partial w^{\bar j} } \right)$ and $ \tilde{g}_{i \bar{j}} = g \left( \frac{\partial}{\partial \tilde{w}^i }, \frac{\partial}{\partial \tilde{w}^{\bar j} } \right)$, then we have
\[
\det(\tilde{g}_{i \bar{j}}) = \left| \det \left( \frac{\partial h^i}{\partial \tilde{w}^j} \right) \right|^2 \det (g_{i \bar{j}}) .
\]
Hence local Ricci potentials differ by a $t$-independent function  if we change coordinate systems. Thus $\frac{\partial}{\partial t} {\rm log} \det g_t $ is a function defined over $M$ as long as we treat it in a $t$-independent local coordinate neighborhood.
\end{remark}
The evolution equation \eqref{eq:2.2} is equivalent to \eqref{eq:2.1}. We can check it as follows: Let $\omega_t$ be the solution of \eqref{eq:2.2}, then there exists a $t$-dependent smooth function $f_t$ such that $-{\rm Ric}(\omega_t) + {\mathbb H} {\rm Ric}(\omega _t) = \sqrt{-1} \partial \bar{\partial} f_t$. After taking trace and using the assumption, we get $\Delta_{\partial} f = - \frac{\partial}{\partial t} \log \det g_t$. If we set $g_{i \bar{j}} = g_{0 \: i \bar{j}} + u_{i \bar{j}}$ for some smooth function $u_t$, we have $\Delta_{\partial} f = - g^{i \bar{j}} \cdot \frac{\partial g_{i \bar{j}}}{\partial t} = \Delta_{\partial} \frac{\partial u}{\partial t}$. By the maximum principle, we have $f = \frac{\partial u}{\partial t}$ modulo some $t$-dependent constant. Hence we have $\frac{\partial g_{i \bar{j}}}{\partial t} = -r_{i \bar{j}} + \gamma_{i \bar{j}}$ and this means that $g_t$ is the solution of \eqref{eq:2.1}.
\begin{definition}
\label{def:2.1}
A pair $(\omega, X)$ of a K{\"a}hler form $\omega \in \Omega$ and a holomolphic vector field $X$ is called a Generalized Quasi-Einstein (GQE) K{\"a}hler metric if it satisfies the equation
\begin{equation}
{\rm Ric}(\omega) - {\mathbb H}{\rm Ric}(\omega) = L_X \omega \label{eq:2.3} ,
\end{equation}
where $L_X$ denotes the Lie derivative with respect to $X$.
\end{definition}
If there exists a GQE metric with respect to a holomorphic vector field $X \neq 0$, a long time solution of \eqref{eq:2.1} does not converge. Actually, for any GQE metric $(\omega_0 , X)$, $\omega_t := ( \exp{(-{\rm Re}(X) \cdot t} ))^* \omega_0$ is a solution of \eqref{eq:2.1} and does not converge. In this case, we should add the term $L_X \omega_t$ to the right hand side of \eqref{eq:2.1} and consider the evolution equation:
\begin{equation}
\frac{\partial \omega_t}{\partial t} = -{\rm Ric}(\omega_t) + {\mathbb H} {\rm Ric}(\omega _t) + L_X \omega_t \label{eq:2.4} .
\end{equation}
If a long time solution of \eqref{eq:2.4} exists and converges to some K{\"a}hler metric, it must be a GQE metric with respect to $X$. Generally, it is known that the evolution equation \eqref{eq:2.4} has the unique short time solution \cite[Section 11]{Guan07}.
\section{Admissible bundles}
\label{sect:3}
In this section, we recall special projective bundles called admissible bundles \cite[Section 1]{ACGT08}.
\begin{definition}
\label{def:3.1}
A projective bundle of the form $M= {\mathbb P}(E_0 \oplus E_{\infty}) \rightarrow S$ is called an admissible bundle if it satisfies the following conditions:\\
(1) $S$ has the universal covering $\tilde{S}= \prod_{a \in \mathcal{A}}S_a$ (for a finite set ${\mathcal A} \subset {\mathbb N}$) of simply connected K{\"a}hler manifolds $(S_a,\pm g_a,\pm \omega_a)$ of complex dimensions $d_a$ with $(g_a,\omega_a)$ being pullbacks of tensors on $S$; here $\pm$ means that either $+\omega_a$ or $-\omega_a$ is a K{\"a}hler form which defines a K{\"a}hler metric denoted by $+g_a$ or $-g_a$ respectively.\\
(2) $E_0$ and $E_{\infty}$ are holomorphic plojectively-flat Hermitian vector bundles over $S$ of rank $d_0 +1$ and $d_{\infty} +1$ with $c_1(E_{\infty}) / {\rm rank} E_{\infty} - c_1(E_0)/{\rm rank} E_0 = [\omega_S/ 2\pi]$ and $\omega_S = \sum_{a \in {\mathcal A}} \omega_a$.
\end{definition}

Let $M$ be an admissible bundle. We define several notations and give some remarks that we will use later:
\begin{itemize}
\item We put the index set $\hat{\mathcal A} := \{ a \in {\mathbb N} \cup \{ 0, \infty \} | d_a >0 \}$.
\item $e_0 = {\mathbb P}(E_0 \oplus 0 )$ (resp. $ e_{\infty} = {\mathbb P}(0 \oplus E_{\infty})$) denotes a subbundle of $M$. Then $e_0$ and $e_{\infty}$ are disjoint submanifolds of $M$.
\item ${\mathbb P}(E_0) \rightarrow S$ (resp. ${\mathbb P}(E_{\infty}) \rightarrow S$) is equipped with the fiberwise Fubini-Study metric with the scalar curvature $d_0(d_0 +1)$ (resp. $d_{\infty}(d_{\infty} +1)$), which is denoted by $(g_0, \omega_0)$ (resp. $(-g_{\infty}, -\omega_{\infty})$).
\item Let $\hat{M}$ be the blow-up of $M$ along the set $e_0 \cup e_{\infty}$, and set $ \hat{S} = {\mathbb P} (E_0) \times_S {\mathbb P}(E_{\infty}) \rightarrow S$. Then $\hat{M} \rightarrow \hat{S}$ has a ${\mathbb CP}^1$-bundle structure.
\item We define a $U(1)$-action on $M$ by the canonical $U(1)$-action on $E_0$. Then the Hermitian structures of $E_0$ and $E_{\infty}$ induce the fiberwise moment map $z:M \rightarrow [-1, 1]$ of this $U(1)$-action with critical sets $z^{-1}(1) = e_0$ and $z^{-1}(-1) = e_{\infty}$. 
\item $K$ denotes the infinitesimal generator of the $U(1)$-action on $M$.
\item $\hat{e}_0$ (resp. $\hat{e}_{\infty}$) denotes the exceptional divisor corresponding to the submanifold $e_0$ (resp. $e_{\infty}$), and set $M^0 = M \backslash (e_0 \cup e_{\infty})$. Then $M^0 \rightarrow \hat{S}$ has a ${\mathbb C}^*$-bundle structure.
\item If we regard $M^0$ as an open subset of $\hat{M}$, the restriction of the canonical $U(1)$-action on $\hat{M}$ to $M^0$ coincides with the induced $U(1)$-action from $M$.
\end{itemize}
\begin{definition}
\label{def:3.2}
A K{\"a}hler class $\Omega$ on $M$ is called admissible if there are constants $x_a$, with $x_0 =1$ and $x_{\infty} = -1$, such that
the pullback of $\Omega$ to $\hat{M}$ has the form
\begin{equation}
\Omega = \sum_{a \in \hat{\mathcal A}} [\omega_a]/x_a + \hat{\Xi} \label{eq:3.1} ,
\end{equation}
where $\hat{\Xi}$ is the Poincar\'e dual to $2 \pi [\hat{e}_0 + \hat{e}_{\infty}]$. 
\end{definition}
We can see that any admissible class $\Omega$ has the form
\begin{equation}
\Omega = \sum_{a \in {\mathcal A}} [\omega_a]/x_a + \Xi \label{eq:3.2} ,
\end{equation}
where the pullback of $\Xi$ to $\hat{M}$ is $[\omega_0] - [\omega_{\infty}] + \hat{\Xi}$, i.e., the cohomology class  $[\omega_0] - [\omega_{\infty}]+\hat{\Xi}$
vanishes along the fiber of $\hat e_0 \rightarrow e_0$ and $\hat e_{\infty} \rightarrow e_{\infty}$.
Since $\Omega$ is K{\"a}hler, one can also see that $0 < | x_a | < 1$ for all $a \in {\mathcal A}$ and $x_a$ has the same sign as $g_a$.
Since the blow-up $\hat{M} \rightarrow M$ induces an injective map on cohomology, admissible classes are uniquely determined by the parameters $\{x_a \}$. We call this the \textbf{admissible data} of $\Omega$.

Now we assume that $\pm g_a$ has constant scalar curvature ${\rm Scal}(\pm g_a) = \pm d_a s_a$, where $s_a$ are constants defined in \cite[Section 1.2]{ACGT08}.
\begin{definition}
\label{def:3.3}
Let $\Omega$ be an admissible class with the admissible data $\{x_a \}$.
An admissible K{\"a}hler metric $g$ is the K{\"a}hler metric on $M$ which has the form
\begin{equation}
g = \sum_{a \in \hat{\mathcal A}} \frac{1+ x_a z}{x_a} g_a + \frac{dz^2}{\Theta (z) } + \Theta (z) \theta ^2 , \; \omega = \sum_{a \in \hat{\mathcal A}} \frac{1+ x_a z}{x_a} \omega_a + dz \wedge \theta \label{eq:3.3}
\end{equation}
on $M^0$, where $\theta$ is the connection 1-form $(\theta (K) =1)$ with the curvature $d \theta = \sum_{a \in \hat{\mathcal A}} \omega_a$, and $\Theta$ is a smooth function on $[-1,1]$ satisfying
\begin{equation}
\Theta > 0 \; \text{on} \; (-1,1) , \; \Theta (\pm 1) = 0 \;\; \text{and} \;\; \Theta'(\pm 1) = \mp 2 \label{eq:3.4} .
\end{equation}
The form $\omega$ defined by \eqref{eq:3.3} is a symplectic form, and the compatible complex structure $J$ of $(g, \omega)$ is given by the pullback of the base complex structure and the relation  $Jdz = \Theta \theta$.
\end{definition}
\begin{remark}
\label{rem:3.1}
Using the relation $d \theta = \sum_{a \in \hat{\mathcal A}} \omega_a$, we can check that $\omega$ is closed and $\Omega = [ \omega ]$. Hence $g$ is a K{\"a}hler metric whose K{\"a}hler form $\omega$ belongs to $\Omega$.
\end{remark}
\begin{remark}
\label{rem:3.2}
The defining equation \eqref{eq:3.3} is motivated by the representation of the canonical admissible metric $g_c$ in polar coordinates. In this case, the corresponding function $\Theta_c$ is given by $\Theta_c(z) =1-z^2$, where $g_c$ and $\Theta_c$ will be defined later in this section.
\end{remark}
As will be seen later, the condition \eqref{eq:3.4} is the necessary and sufficient condition to extend a metric $g$ on $M^0$ which has the form \eqref{eq:3.3} to a smooth metric defined on $M$. 
We also use the function
\[
F(z) = \Theta (z) \cdot p_c(z),
\]
where $p_c(z) = \prod_{a \in \hat{\mathcal A}} (1+x_a z)^{d_a}$ is a polynomial of $z$. Then by \eqref{eq:3.4}, we know that $F$ has the condition
\begin{equation}
F > 0 \; \text{on} \; (-1,1) , \; F (\pm 1) = 0 \;\; \text{and} \;\; F'(\pm 1) = \mp 2 p_c (\pm 1) \label{eq:3.5} .
\end{equation}
This is an only necessary condition for $F$, i.e., we can not restore $\Theta$ from $F$ satisfying \eqref{eq:3.5} in general. However, it is possible if $g$ is extremal or GQE (cf. \cite[Section 2.4]{ACGT08} and \cite[Section 4]{MT11}).

Conversely, for any cohomology class $\Omega$ defined by \eqref{eq:3.2}, we can show that $\Omega$ is K{\"a}hler if we assume $0<|x_a| < 1$ and $x_a$ has the same sign as $g_a$, and hence $\Omega$ is admissible. We can prove this by constructing the ``canonical admissible metric'' $g_c$ and the ``canonical symplectic form'' $\omega_c$ belonging to $\Omega$: Let $r_0$ and $r_{\infty}$ be the norm functions induced by the Hermitian metrics on $E_0$ and $E_{\infty}$. Then $z_0 = \frac{1}{2} r_0 ^2$ and $z_{\infty} = \frac{1}{2} r_{\infty}^2$ are fiberwise moment map for the $U(1)$-actions given by the scalar multiplication in $E_0$ and $E_{\infty}$. Let us consider the diagonal $U(1)$-action on $E_0 \oplus E_{\infty}$. Since $U(1)$ acts freely on the level set $z_0 + z_{\infty} = 2$, the restricted metric on this level set descends to the fiberwise Fubini-Study metric on the quotient manifold $M$, which we denote by $(g_{M/S}, \omega_{M/S})$. We extend $(g_{M/S}, \omega_{M/S})$ to a tensor on $M$ by requiring that the horizontal distribution of the induced connection on $M$ is degenerate. Hence $(g_{M/S}, \omega_{M/S})$ is  semi-positive. In order to get a (positive definite) metric on $M$, we set
\[
g_c = \sum_{a \in \mathcal{A}} \frac{1+ x_a z}{x_a} g_a+ g_{M/S}, \; \omega_c = \sum_{a \in {\mathcal A}} \frac{1+ x_a z}{x_a} \omega_a + \omega_{M/S} .
\]
Then $(g_c, \omega_c)$ is a K{\"a}hler metric with respect to the canonical complex structure $J_c$ on $M$. We can see that this metric is admissible and the coresponding function $\Theta_c$ is given by $\Theta_c (z) = 1-z^2$ (cf. \cite[Lemma 1]{ACGT08}). We call this the \textbf{canonical admissible K{\"a}hler metric}.
\begin{remark}
\label{rem:3.3}
In the original paper \cite[Section 1.3 and 1.4]{ACGT08}, admissible classes and admissible metrics are defined by \eqref{eq:3.2} and \eqref{eq:3.3} up to scale respectively because several conditions for metrics (extremal, GQE, etc.) are preserved under scaling of metrics. However, in this paper, the argument of scaling metrics sometimes becomes essential. This is why we define them not up to scale.
\end{remark}
Lastly, we will mention symplectic potentials \cite[Section 1.4]{ACGT08}. As is seen above, admissible metrics with a fixed symplectic form $\omega$ define different complex structures. However, we can regard them as the same complex structure $J_c$ via $U(1)$-equivariant fiber-preserving diffeomorphisms:
a function $u \in C^0 ([-1,1])$ is called a \textbf{symplectic potential} if $u'' (z) = 1/ \Theta (z)$, $u(\pm 1) = 0$ and $u-u_c$ is smooth on $[-1,1]$, where $u_c$ is the \textbf{canonical symplectic potential} defined by
\[
u_c (z) = \frac{1}{2} \left\{ (1-z) \log (1-z) + (1+z) \log (1+z) -2 \log 2 \right\} .
\]
By de l'H\^opital's rule, we can see that there is a one to one correspondence between $u$ and $\Theta$ satisfying \eqref{eq:3.4} (cf. \cite[Lemma 2]{ACGT08}). We can write a K{\"a}hler potential of $\omega$ by means of the symplectic potential $u$ and its fiberwise Legendre transform over $\hat{S}$. Actually, if we put
\begin{equation}
y = u'(z) \;\; \text{and} \;\; h(y) = -u(z) + yz \label{eq:3.6},
\end{equation}
then we obtain $d_J ^c y = \theta$ and $dd_J ^c h (y) = \omega - \sum_{a \in \hat{\mathcal A}} \omega_a / x_a$ on $M^0$.
There are local 1-forms $\alpha$ on $\hat{S}$ such that $\theta = dt + \alpha$, where $t: M^0 \rightarrow {\mathbb R} / 2 \pi {\mathbb Z}$ is an angle function locally defined up to an additive constant.
Let $y_c$ and $h_c$ be the functions corresponding to $u_c$. Since $\exp{(y + \sqrt{-1} t)}$ and $\exp{(y_c + \sqrt{-1} t)}$ give ${\mathbb C}^*$ coordinates on the fibers, there exists $U(1)$-equivariant fiber-preserving diffeomorphism $\Psi$ of $M^0$ such that
\begin{equation}
\Psi^* y = y_c , \; \Psi^* t = t \;\; \text{and} \;\; \Psi^* J = J_c \label{eq:3.7} .
\end{equation}
As $J_c$ and $J$ are integrable complex structures, $\Psi$ extends to a $U(1)$-equivariant diffeomorphism of $M$ leaving fixed any point on $e_0 \cup e_{\infty}$.
Hence $\Psi^* \omega$ is a K{\"a}hler form on $M$ with respect to $J_c$. As $\Psi : (M, J_c) \rightarrow (M, J)$ is biholomorphic, we have $dd_{J_c} ^c h (y_c) = dd_{J_c} ^c h (\Psi^* y) = \Psi^* dd_J ^c h (y) = \Psi^* \omega - \sum_{a \in \hat{\mathcal A}} \omega_a/ x_a$ and $ \Psi^* \omega - \omega = dd^c _{J_c} ( h(y_c) - h_c (y_c))$ on $M^0$, where we remark that the function $h(y_c) - h_c (y_c)$ is extended smoothly on $M$ (cf. \cite[Lemma 3]{ACGT08}). Let ${\mathcal K}_\omega ^{\rm adm}$ be the moduli space of admissible metrics with a fixed symplectic form $\omega$. From the above, we have ${\mathcal K}_\omega ^{\rm adm} = \{ \Theta \; \text{satisfying} \; \eqref{eq:3.4} \} = \{ \text{symplectic potential} \; u\}$.
\section{GQE metrics on Admissible bundles}
\label{sect:4}
Let $M$ be an $m$-dimensional compact K{\"a}hler manifold and $\Omega$ a K{\"a}hler class on $M$.
Let $g$ be a K{\"a}hler metric whose K{\"a}hler form $\omega$ belongs to $\Omega$.
For any holomorphic vector field $V$, we define a complex valued smooth function $\theta_V$ on $M$ by
\begin{equation}
i_V \omega = (\text{harmonic (0,1)-form}) + \sqrt{-1} \bar{\partial} \theta_V \label{eq:4.1} .
\end{equation}
We call the function $\theta_V$ a \textbf{Killing potential} if ${\rm Im} (V)$ is a Killing vector field with respect to $g$, where $i_V$ means the inner product with respect to $V$. The function $\theta_V$ uniquely exists up to an additive constant. We define a real valued smooth function $\kappa$ on $M$ by
\begin{equation}
{\rm Ric}(\omega) - {\mathbb H}{\rm Ric}(\omega) = \sqrt{-1} \partial \bar{\partial} \kappa \label{eq:4.2} .
\end{equation}
The function $\kappa$ is called the \textbf{Ricci potential}. Then we have
\begin{lemma}
\label{lem:4.1}
A K{\"a}hler metric $g$ is a GQE metric with respect to a holomorphic vector field $X$ if and only if its Ricci potential $\kappa$ satisfies the equation $ \kappa = \theta_X$ up to an additive constant.
\end{lemma}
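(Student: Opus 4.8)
The plan is to reduce both the GQE equation \eqref{eq:2.3} and the defining relation \eqref{eq:4.2} for the Ricci potential to identities between $\sqrt{-1}\,\partial\bar\partial$ of functions, and then to compare them. The bridge between the two is the identity
\[
L_X \omega = \sqrt{-1}\,\partial\bar\partial\,\theta_X ,
\]
which I would establish first; once it is in hand, both directions of the equivalence become immediate.

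To prove this identity, I would start from Cartan's formula $L_X \omega = d\,(i_X \omega) + i_X\,(d\omega)$. Since $\omega$ is closed the second term vanishes and $L_X \omega = d\,(i_X \omega)$. Here it is worth noting that because $X$ is a holomorphic $(1,0)$-vector field and $\omega$ is of type $(1,1)$, the contraction $i_X \omega$ is a $(0,1)$-form, and the K\"ahler identity $\partial_{\bar k} g_{i \bar j} = \partial_{\bar j} g_{i \bar k}$ together with $\bar\partial X = 0$ shows that $i_X \omega$ is $\bar\partial$-closed; this is exactly what makes the Hodge decomposition \eqref{eq:4.1} (harmonic part plus $\bar\partial$-exact part, with no $\bar\partial$-coexact term) legitimate. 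Substituting \eqref{eq:4.1} into $d\,(i_X \omega)$, the harmonic $(0,1)$-form is simultaneously $\partial$- and $\bar\partial$-closed on the compact K\"ahler manifold $M$, hence $d$-closed, so it drops out; and writing $d = \partial + \bar\partial$ gives $d(\sqrt{-1}\,\bar\partial \theta_X) = \sqrt{-1}\,\partial\bar\partial \theta_X$ because $\bar\partial^2 = 0$. This yields the desired identity.

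With the identity established, the equivalence follows by direct comparison. By the definition \eqref{eq:4.2} of $\kappa$ we always have ${\rm Ric}(\omega) - {\mathbb H}{\rm Ric}(\omega) = \sqrt{-1}\,\partial\bar\partial \kappa$, while the GQE condition \eqref{eq:2.3} reads ${\rm Ric}(\omega) - {\mathbb H}{\rm Ric}(\omega) = L_X \omega = \sqrt{-1}\,\partial\bar\partial \theta_X$. Hence $g$ is GQE with respect to $X$ if and only if $\sqrt{-1}\,\partial\bar\partial(\kappa - \theta_X) = 0$. Taking the trace of this $(1,1)$-form with respect to $\omega$ shows that $\kappa - \theta_X$ is harmonic on the compact manifold $M$, and therefore constant; conversely, if $\kappa - \theta_X$ is constant then $\sqrt{-1}\,\partial\bar\partial(\kappa - \theta_X) = 0$ and the two equations coincide. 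This proves the stated equivalence ``up to an additive constant.''

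The genuinely delicate point is the key identity $L_X \omega = \sqrt{-1}\,\partial\bar\partial \theta_X$, and specifically the justification that the harmonic component in \eqref{eq:4.1} contributes nothing after applying $d$ and that $i_X \omega$ carries no $\bar\partial$-coexact part; both rest on the interplay between the holomorphicity of $X$ and the K\"ahler condition, so I would record these facts explicitly rather than treat the decomposition \eqref{eq:4.1} as automatic. The concluding step---that a function annihilated by $\sqrt{-1}\,\partial\bar\partial$ is constant on compact $M$---is routine via the maximum principle, the only mild subtlety being that $\theta_X$ is a priori complex-valued, so the additive constant should be understood as a complex constant (equivalently, ${\rm Im}\,\theta_X$ is forced to be constant).
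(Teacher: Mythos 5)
Your argument is correct and follows essentially the same route as the paper's proof: apply $d$ to \eqref{eq:4.1} to obtain $L_X\omega = \sqrt{-1}\,\partial\bar\partial\,\theta_X$ (the harmonic part being $d$-closed), compare with \eqref{eq:2.3} and \eqref{eq:4.2}, and conclude via the maximum principle that $\kappa-\theta_X$ is constant. The only difference is that you spell out the justification of the decomposition \eqref{eq:4.1} and the vanishing of the harmonic term, which the paper leaves implicit.
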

\begin{proof}
Applying $d$ to the both hand sides of \eqref{eq:4.1}, we get $L_V \omega = \sqrt{-1} \partial \bar{\partial} \theta_V$. Combining this with \eqref{eq:2.3} and \eqref{eq:4.2}, and using the maximum principle, we have the desired result. 
\end{proof}
Taking the trace of the both hand sides of \eqref{eq:4.2}, we have
\begin{equation}
{\rm Scal}_g (\omega) - \overline{\rm Scal} = - \Delta_{\partial} \kappa \label{eq:4.3} .
\end{equation}
Now we will consider the case when $\Omega = 2 \pi c_1(M)$ for a moment. Since ${\mathbb H}{\rm Ric} = \omega$, \eqref{eq:2.3} becomes
\begin{equation}
{\rm Ric}(\omega) - \omega = L_X \omega \label{eq:4.4} ,
\end{equation}
and we call the soultions of \eqref{eq:4.4} \textbf{K{\"a}hler-Ricci solitons}.
Applying $L_V$ to the both hand sides of  \eqref{eq:4.2}, we have
\begin{equation}
-\Delta_{\partial} \theta_V + \theta_V + V(\kappa) = \text{(const)} \label{eq:4.5} .
\end{equation}
The following function is known as the obstruction to the existence of K{\"a}hler-Ricci solitons:
\begin{theorem}[\cite{TZ02}]
\label{the:4.1}
The function ${\rm {\bf TZ}}_X$ defined over the space of all holomorphic vector fields on $M$ by
\begin{equation}
{\rm {\bf TZ}}_X (V) = \int \theta_V e^{\theta_X} \frac{\omega^m}{m!} = - \int V(\kappa - \theta_X) e^{\theta_X} \frac{\omega^m}{m!} \label{eq:4.6}
\end{equation}
for a holomorphic vector field $X$ is independent of the choice of a K{\"a}hler form $\omega \in 2 \pi c_1 (M)$, here for any $V$, $\theta_V$ is normalized by $-\Delta_{\partial} \theta_V + \theta_V + V(\kappa) = 0$.
\end{theorem}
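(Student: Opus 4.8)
The plan is to show that $\mathbf{TZ}_X(V)$ is invariant under the choice of $\omega \in 2\pi c_1(M)$ by differentiating it along a path of Kähler forms $\omega_s = \omega + s\sqrt{-1}\partial\bar\partial\phi$ (with $\phi$ an arbitrary smooth real function) and verifying that $\frac{d}{ds}\mathbf{TZ}_X(V)\big|_{s=0} = 0$. I would work with the second expression in \eqref{eq:4.6}, namely $-\int V(\kappa-\theta_X)\,e^{\theta_X}\,\frac{\omega^m}{m!}$, since the normalization $-\Delta_\partial\theta_V + \theta_V + V(\kappa) = 0$ makes the interplay between $\kappa$, $\theta_X$, and $\theta_V$ explicit.

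First I would record how each ingredient varies with $s$. Writing $\dot\omega = \sqrt{-1}\partial\bar\partial\phi$, the volume form varies by $\frac{d}{ds}\frac{\omega_s^m}{m!} = (\Delta_\partial\phi)\frac{\omega_s^m}{m!}$ (up to sign conventions for $\Delta_\partial$). Because we stay inside $2\pi c_1(M)$, the Ricci potential $\kappa$ satisfies ${\rm Ric}(\omega_s)-\omega_s = \sqrt{-1}\partial\bar\partial\kappa_s$, and differentiating gives $\frac{d}{ds}\sqrt{-1}\partial\bar\partial\kappa_s = -\sqrt{-1}\partial\bar\partial\Delta_\partial\phi - \sqrt{-1}\partial\bar\partial\phi$, so that $\dot\kappa = -\Delta_\partial\phi - \phi$ modulo a constant. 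Next I would compute $\dot\theta_V$ and $\dot\theta_X$: from the defining relation \eqref{eq:4.1}, $i_V\omega_s = (\text{harmonic}) + \sqrt{-1}\bar\partial\theta_{V,s}$, one finds $\sqrt{-1}\bar\partial\dot\theta_V = i_V\dot\omega = i_V\sqrt{-1}\partial\bar\partial\phi = \sqrt{-1}\bar\partial(V\phi)$ on the potential level, giving $\dot\theta_V = V(\phi)$ up to a constant; the constant is then pinned down by differentiating the normalization condition. The same computation yields $\dot\theta_X = X(\phi)$ up to a constant.

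With these variations in hand, I would assemble $\frac{d}{ds}\mathbf{TZ}_X(V)$ as a sum of four terms — coming from $\dot\theta_V$, from $\theta_V\,\dot\theta_X$ inside the exponential $e^{\theta_X}$, and from the variation of the volume form — and then integrate by parts, using that $X$ and $V$ are holomorphic and that $e^{\theta_X}\frac{\omega^m}{m!}$ is (up to normalization) the natural weighted measure for which $X$ is a gradient-type field. The key identity to exploit is that for a holomorphic vector field, $\int V(F)\,e^{\theta_X}\frac{\omega^m}{m!} = -\int F\,(\Delta_\partial\theta_V - \theta_V)\,e^{\theta_X}\frac{\omega^m}{m!}$-type integration by parts combined with the normalization $-\Delta_\partial\theta_V + \theta_V + V(\kappa)=0$, which lets the $\dot\kappa$-term and the volume-variation term cancel against the exponential-variation term. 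I expect the main obstacle to be bookkeeping the additive constants in $\theta_V$, $\theta_X$, and $\kappa$ consistently: each is defined only up to a constant, and only the prescribed normalization makes the cancellation exact rather than off by a constant multiple of $\int e^{\theta_X}\frac{\omega^m}{m!}$. Carefully tracking these constants — and verifying that the divergence terms genuinely vanish because $V$ and $X$ are holomorphic (so their $(0,1)$-parts are $\bar\partial$ of potentials) — is where the real work lies, whereas the individual variational formulas are routine.
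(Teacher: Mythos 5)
The paper gives no proof of this statement: Theorem~\ref{the:4.1} is quoted verbatim from Tian--Zhu \cite{TZ02} as a known result, so there is no internal argument to compare your proposal against. Measured against the original source, your plan is exactly Tian--Zhu's: connect two metrics in $2\pi c_1(M)$ by $\omega_s=\omega+s\sqrt{-1}\partial\bar\partial\phi$, compute the first variations of the volume form, of $\kappa$, and of the potentials, and kill the derivative by weighted integration by parts against $e^{\theta_X}\frac{\omega^m}{m!}$. Your individual variation formulas are correct ($\dot\kappa=-\Delta_\partial\phi-\phi$ up to a constant, $\dot\theta_V=V(\phi)$ up to a constant, and the $\Delta_\partial\phi$ factor from the volume form), and you correctly identify the two genuinely delicate points: pinning down the additive constants and carrying out the cancellation. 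One remark on the first point: under the stated normalization $-\Delta_\partial\theta_V+\theta_V+V(\kappa)=0$ one in fact has $\theta_{V,s}=\theta_V+V(\phi_s)$ with \emph{no} extra $s$-dependent constant; verifying this requires differentiating the normalization and using the variation of $\Delta_\partial$ itself, which is a small but nontrivial lemma in \cite{TZ02} rather than a one-line observation. With that supplied, the remaining cancellation is the routine computation you describe, so the proposal is a faithful (if not fully executed) reconstruction of the cited proof.
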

Now let $M$ be an $m \left( := \sum_{a \in \hat{\mathcal A}} d_a +1 \right)$-dimensional admissible bundle and $\Omega$ an admissible class on $M$. First, we will review the method of constructing GQE metrics over admissible bundles studied by Maschler and T{\o}nnesen-Friedman \cite{MT11}. Let $C^{\infty}([-1,1])$ be the space of smooth functions over the interval $[-1,1]$. According to \cite[Lemma 2.1]{KS86}, there is a one to one correspondence
\[
C^{\infty}([-1,1]) \rightarrow \{ \text{smooth function over $M$ depending only on $z$} \}
\]
given by $S \mapsto S(z):=S \circ z$ for $S \in C^{\infty}([-1,1])$.
\begin{lemma}[\cite{MT11}, Proposition 3.1]
\label{lem:4.2}
For any admissible metric and $S \in C^{\infty}([-1,1])$, we have
\begin{equation}
\Delta_{\partial} S = - \frac{[S'(z) \cdot F(z)]'}{2 p_c(z)} \label{eq:4.7} .
\end{equation}
\end{lemma}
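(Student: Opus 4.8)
The plan is to exploit that $S$ depends only on the momentum coordinate $z$, so that on the dense open set $M^0$ the operator $\Delta_\partial$ collapses to a one-dimensional ordinary-differential operator in $z$, and then to read off the correct weight and leading coefficient from \eqref{eq:3.3}. Throughout I would use the Hodge--K\"ahler identity $\Delta_\partial=\partial^*\partial=-\tfrac12\Delta_g$, where $\Delta_g=\operatorname{div}_g\operatorname{grad}_g$ is the Riemannian Laplace--Beltrami operator; this is the convention forced by \eqref{eq:4.3}, and it fixes both the overall sign and the factor $\tfrac12$ appearing in \eqref{eq:4.7}.

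First I would extract the two metric ingredients. Since the $dz^2$-term of $g$ in \eqref{eq:3.3} is $\Theta(z)^{-1}dz^2$ and the co-frame $\{dz,\theta,(\text{base }1\text{-forms})\}$ diagonalizes $g$, the $z$-direction occupies its own one-dimensional block, so $g^{zz}=\Theta(z)$ and, because $S=S(z)$, its gradient has only a $z$-component, $\operatorname{grad}_g S=\Theta(z)\,S'(z)\,\partial_z$. Next, wedging out \eqref{eq:3.3} and using $\omega_a^{d_a+1}=0$ gives
\[
\frac{\omega^m}{m!}=\Big(\prod_{a\in\hat{\mathcal A}}x_a^{-d_a}\Big)\,p_c(z)\,\Big(\bigwedge_{a\in\hat{\mathcal A}}\frac{\omega_a^{d_a}}{d_a!}\Big)\wedge dz\wedge\theta ,
\]
with $p_c(z)=\prod_a(1+x_az)^{d_a}$; as the K\"ahler volume equals the Riemannian one, the volume density relative to the $z$-independent reference $\bigwedge_a\omega_a^{d_a}\wedge dz\wedge\theta$ is, up to a constant, exactly $p_c(z)$.

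Then I would assemble the divergence. Because $\operatorname{grad}_g S$ points only along $z$ and the density is proportional to $p_c(z)$, the coordinate divergence formula reduces to a single $z$-derivative,
\[
\Delta_g S=\operatorname{div}_g(\operatorname{grad}_g S)=\frac{1}{p_c(z)}\,\frac{d}{dz}\!\Big(p_c(z)\,\Theta(z)\,S'(z)\Big)=\frac{[\,S'(z)\,F(z)\,]'}{p_c(z)},
\]
the last equality being just $F=\Theta p_c$. Multiplying by $-\tfrac12$ yields \eqref{eq:4.7}. Equivalently, on functions of $z$ the operator $\Delta_\partial$ must be the Sturm--Liouville operator $-\tfrac{1}{2p_c}\tfrac{d}{dz}\big(p_c\,\Theta\,\tfrac{d}{dz}\big)$, self-adjoint for the weight $p_c\,dz$ coming from the volume form and with leading coefficient $\Theta=g^{zz}$.

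The crux I expect is the identification of the volume density with $p_c(z)$: this is precisely what produces the logarithmic term $p_c'/p_c$ in the divergence, and it is exactly that term which combines with $(\Theta S')'$ to reconstitute the single product-rule derivative $[S'\Theta p_c]'=[S'F]'$. Concretely $\Delta_g S=(\Theta S')'+\Theta S'\,(p_c'/p_c)$, and without matching the weight to $p_c$ the right-hand side would not close up into $[S'F]'/p_c$. The remaining work is bookkeeping: checking the orthogonality giving $g^{zz}=\Theta$, tracking the $z$-independent constants in $\omega^m/m!$ so they cancel in the divergence, and fixing the sign and factor via $\Delta_\partial=-\tfrac12\Delta_g$. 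Finally, since \eqref{eq:3.3} holds on $M^0$ and $S$ extends smoothly across $e_0\cup e_\infty$, the identity \eqref{eq:4.7} established on $M^0$ holds on all of $M$ by continuity. An alternative route, closer to \cite{ACGT08}, computes $dd^c S$ directly from $J\,dz=\Theta\,\theta$ and $d\theta=\sum_a\omega_a$, giving $dd^c S=(\Theta S')'\,dz\wedge\theta+\Theta S'\sum_a\omega_a$, and then takes $\operatorname{tr}_\omega$ by wedging with $\omega^{m-1}/(m-1)!$; this produces the same combination $(\Theta S')'+\Theta S'\sum_a d_a x_a/(1+x_a z)$ and hence \eqref{eq:4.7}.
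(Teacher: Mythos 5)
Your argument is correct, and it is consistent with all the conventions the paper actually uses: the sign and the factor $\tfrac12$ you fix via $\Delta_\partial=-\tfrac12\Delta_g$ agree with \eqref{eq:4.3}, your formula specializes to $-2\Delta_\partial z=F'/p_c$ exactly as used in the proof of Theorem~\ref{the:4.2}, and the volume identity $\omega^m/m!=p_c(z)\bigl(\bigwedge_a(\omega_a/x_a)^{d_a}/d_a!\bigr)\wedge dz\wedge\theta$ is the one the paper quotes from \cite{ACGT08}. There is, however, nothing in the paper to compare your proof against: Lemma~\ref{lem:4.2} is imported verbatim from \cite[Proposition 3.1]{MT11} with no proof given here. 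Your derivation is essentially the standard one from \cite{ACGT08,MT11} --- the Sturm--Liouville form $-\tfrac{1}{2p_c}\tfrac{d}{dz}\bigl(p_c\,\Theta\,\tfrac{d}{dz}\bigr)$ with weight $p_c\,dz$ and leading coefficient $g^{zz}=\Theta$ --- so it supplies a self-contained verification where the paper defers to the literature. The one place where a referee might ask you to be more careful is the divergence step: your ``$\partial_z$'' is the frame vector dual to $dz$ in the adapted coframe $\{dz,\theta,\text{base}\}$, not a coordinate vector field, so to apply $\tfrac{1}{\sqrt{\det g}}\partial_i(\sqrt{\det g}\,g^{ij}\partial_j S)$ literally one should pass to genuine coordinates $(z,t,x^\mu)$ with $\theta=dt+\alpha$; there one checks $g^{zz}=\Theta$, $g^{zt}=g^{z\mu}=0$ and $\det g\propto p_c(z)^2$ (the $\Theta^{-1}$ and $\Theta$ blocks cancel), after which your computation goes through verbatim. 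Your alternative route via $dd^cS=(\Theta S')'\,dz\wedge\theta+\Theta S'\sum_a\omega_a$ avoids this issue entirely and matches the trace identities the paper itself uses in \eqref{eq:5.13}.
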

According to \cite[Section 2.2]{ACGT08}, we can calculate the scalar curvature of any admissible metric $g$ as
\begin{equation}
{\rm Scal}_g (\omega) = \frac{1}{2} \left( \sum_{a \in \hat{\mathcal A}} \frac{2d_a s_a x_a}{1+x_a z} -\frac{F''(z)}{p_c(z)} \right) \label{eq:4.8} .
\end{equation}
By \eqref{eq:4.3} and \eqref {eq:4.8}, $\Delta_{\partial} \kappa$ is a function depending only on $z$. Hence \cite[Corollary 3.2]{MT11} implies $\kappa$ depends only on $z$. We can write $\kappa$ as the composition of $z$ and an element of $C^{\infty}([-1,1])$, which we also denote by $\kappa$. On the other hand, \cite[Theorem 4.4]{Kob95} implies that a K{\"a}hler metric $g$ is GQE if and only if its Ricci potential $\kappa$ is a Killing potential. Hence we have
\begin{lemma}
\label{lem:4.3}
An admissible metric $g$ is GQE if and only if there exists $k \in {\mathbb R}$ such that $\kappa = k z$ up to an additive constant.
\end{lemma}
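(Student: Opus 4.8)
The plan is to read off the lemma from Kobayashi's characterization \cite[Theorem 4.4]{Kob95}, already quoted above, that an admissible metric $g$ is GQE if and only if its Ricci potential $\kappa$ is a Killing potential. Since we have observed that $\kappa$ depends only on $z$, the statement reduces to the essentially algebraic claim that I would isolate as the heart of the argument: \emph{a smooth function of the form $S(z)$ is a Killing potential if and only if $S$ is affine in $z$}.

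First I would record that $z$ is itself a Killing potential. The $U(1)$-action generated by $K$ acts by biholomorphic isometries of $(g,J,\omega)$, and the computation $i_K \omega = -dz$ (immediate from \eqref{eq:3.3} together with $\theta(K)=1$ and $K(z)=0$, since $\omega_a$ is horizontal) shows that $z$ is its moment map. Hence $z$ satisfies \eqref{eq:4.1} with the Hamiltonian field of a holomorphic vector field, so $z$ is a Killing potential and, equivalently, the $(1,0)$-gradient ${\rm grad}^{1,0} z := g^{i\bar j}(\partial_{\bar j} z)\,\partial_i$ is a holomorphic vector field on $M^0$. For the direction $\kappa = kz \Rightarrow g$ is GQE, I would then note only that Killing potentials are closed under real scalar multiplication and under addition of constants; thus $kz$ is again a Killing potential, and \cite[Theorem 4.4]{Kob95} yields that $g$ is GQE.

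For the converse I would use the standard fact that $f$ is a Killing potential precisely when ${\rm grad}^{1,0} f$ is holomorphic, i.e. $\bar\partial ({\rm grad}^{1,0} f) = 0$. If $g$ is GQE then $\kappa = \kappa(z)$ is a Killing potential, and since it depends only on $z$ we have ${\rm grad}^{1,0}\kappa = \kappa'(z)\,{\rm grad}^{1,0} z$. Applying $\bar\partial$ and using $\bar\partial({\rm grad}^{1,0} z) = 0$ from the previous step gives
\[
\bar\partial\bigl( {\rm grad}^{1,0}\kappa \bigr) = \kappa''(z)\, \bar\partial z \otimes {\rm grad}^{1,0} z .
\]
On $M^0$ the metric is nondegenerate and $z$ is nonconstant, so the tensor $\bar\partial z \otimes {\rm grad}^{1,0} z$ is nowhere zero there; hence holomorphy of ${\rm grad}^{1,0}\kappa$ forces $\kappa''(z) \equiv 0$ on $M^0$, and by continuity on all of $[-1,1]$. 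Therefore $\kappa$ is affine, i.e. $\kappa = kz$ up to an additive constant.

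The main obstacle I anticipate is precisely this converse computation: justifying the Leibniz rule for $\bar\partial$ on the product $\kappa'(z)\,{\rm grad}^{1,0} z$ and, above all, verifying that $\bar\partial z \otimes {\rm grad}^{1,0} z$ does not vanish on the open dense set $M^0$, which is what allows one to conclude $\kappa'' = 0$. Everything else is a direct application of the quoted results together with the moment-map identity $i_K\omega = -dz$.
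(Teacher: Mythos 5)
Your proposal is correct and follows the same route as the paper: Kobayashi's characterization that $g$ is GQE iff $\kappa$ is a Killing potential, combined with the observation that $\kappa$ depends only on $z$. The paper leaves implicit the final step that a Killing potential of the form $S(z)$ must be affine in $z$ (and conversely), and your argument via the holomorphy of ${\rm grad}^{1,0}\kappa=\kappa'(z)\,{\rm grad}^{1,0}z$ and the nonvanishing of $dz$ on $M^0$ supplies exactly that detail correctly.
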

Put
\begin{equation}
P(t) = 2 \int_{-1} ^t \left( \left( \sum_{a \in \hat {\mathcal A}} \frac{d_a s_a x_a}{1+x_a s} \right) \cdot p_c (s) - \frac{\beta_0}{\alpha_0} \cdot p_c (s) \right) ds + 2p_c(-1) \label{eq:4.9} ,
\end{equation}
where $\alpha_0$ and $\beta_0$ are constants defined by
\begin{equation}
\alpha_0 = \int_{-1}^1 p_c(t) dt \;\; \text{and} \;\; \beta_0 = p_c(1) + p_c(-1) + \int_{-1}^1 \left( \sum_{a \in \hat{\mathcal A}} \frac{d_a s_a x_a}{1+x_a t} \right) p_c (t) dt \label{eq:4.10} .
\end{equation}
We often use the following properties for $P(t)$:
\begin{lemma}[\cite{MT11}, Lemma 4.3]
\label{lem:4.4}
For any given admissible data, $P(t)$ satisfies: If $d_0 = 0$, then $P(-1) >0$, otherwise $P(-1) = 0$. If $d_{\infty} = 0$, then $P(1)<0$, otherwise $P(1) = 0$. Furthermore, $P(t)>0$ in some (deleted) right neighborhood of $t= -1$, and $P(t) < 0$ in some (deleted) left neighborhood of $t=1$. Concretely, we see that if $d_0>0$, then $P^{(d_0)} (-1) > 0$ (and the lower order derivatives vanish), while if $d_{\infty} > 0$, then $P^{(d_{\infty})}(1)$ has sign $(-1)^{d_{\infty} + 1}$ (and the lower order derivatives vanish).
\end{lemma}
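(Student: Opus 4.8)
The plan is to treat the whole statement as a direct computation from the explicit formula \eqref{eq:4.9}, exploiting the fact that $p_c$ is an explicit product of linear factors whose vanishing orders at $t=\pm 1$ are governed by $d_0$ and $d_\infty$.

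First I would evaluate the two endpoint values. Since the integral in \eqref{eq:4.9} is empty at $t=-1$, one gets immediately $P(-1)=2p_c(-1)$. Writing $p_c(-1)=\prod_{a\in\hat{\mathcal A}}(1-x_a)^{d_a}$, the factor coming from $a=0$ (present precisely when $d_0>0$) is $(1-x_0)^{d_0}=0$ because $x_0=1$; whereas when $d_0=0$ every surviving factor is strictly positive since $0<|x_a|<1$ for $a\in\mathcal A$ and $x_\infty=-1$. This gives the dichotomy for $P(-1)$. For $P(1)$ I would substitute the two defining integrals from \eqref{eq:4.10}, namely $\int_{-1}^1 p_c=\alpha_0$ and $\int_{-1}^1\big(\sum_{a\in\hat{\mathcal A}}\frac{d_as_ax_a}{1+x_as}\big)p_c=\beta_0-p_c(1)-p_c(-1)$, into \eqref{eq:4.9}; the $\beta_0$ contributions cancel and one is left with $P(1)=-2p_c(1)$, whence the same analysis of $p_c(1)=\prod_{a\in\hat{\mathcal A}}(1+x_a)^{d_a}$ (the factor $a=\infty$ vanishes iff $d_\infty>0$) yields the dichotomy for $P(1)$.

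For the local behaviour and the derivative counts I would differentiate, obtaining $P'(t)=2Q(t)$ with $Q(t)=\big(\sum_{a\in\hat{\mathcal A}}\frac{d_as_ax_a}{1+x_at}\big)p_c(t)-\frac{\beta_0}{\alpha_0}p_c(t)$, a polynomial. The key observation is the asymmetric vanishing: when $d_0>0$ I factor $p_c(t)=(1+t)^{d_0}\tilde p(t)$ with $\tilde p(-1)>0$; then the single summand $a=0$ equals $d_0s_0(1+t)^{d_0-1}\tilde p(t)$, one power of $1+t$ having been cancelled by $\frac{1}{1+x_0t}$ since $x_0=1$, while every other summand and the term $-\frac{\beta_0}{\alpha_0}p_c$ retains the full factor $(1+t)^{d_0}$. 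Hence $Q(t)=d_0s_0\tilde p(-1)(1+t)^{d_0-1}+O\big((1+t)^{d_0}\big)$, and integrating from $P(-1)=0$ gives $P(t)=2s_0\tilde p(-1)(1+t)^{d_0}+O\big((1+t)^{d_0+1}\big)$. Since the Fubini--Study normalization $\mathrm{Scal}(g_0)=d_0(d_0+1)=d_0s_0$ forces $s_0=d_0+1>0$, this simultaneously shows $P^{(k)}(-1)=0$ for $k<d_0$, that $P^{(d_0)}(-1)=2s_0\tilde p(-1)\,d_0!>0$, and that $P>0$ just to the right of $-1$. The symmetric computation at $t=1$, factoring $p_c(t)=(1-t)^{d_\infty}\hat p(t)$ with $\hat p(1)>0$ and using $s_\infty=-(d_\infty+1)<0$, gives $P(t)=2s_\infty\hat p(1)(1-t)^{d_\infty}+O\big((1-t)^{d_\infty+1}\big)$ with $2s_\infty\hat p(1)<0$, yielding the vanishing of the lower derivatives, $P<0$ just to the left of $1$, and $\operatorname{sign}P^{(d_\infty)}(1)=(-1)^{d_\infty+1}$, the extra sign arising from $\frac{d^{d_\infty}}{dt^{d_\infty}}(1-t)^{d_\infty}=(-1)^{d_\infty}d_\infty!$.

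I expect the only real subtlety to be the bookkeeping of vanishing orders together with pinning down the signs $s_0>0$ and $s_\infty<0$: one must recognize that exactly the term $a=0$ (resp. $a=\infty$) loses one power of $(1+t)$ (resp. $(1-t)$) and therefore dominates near the endpoint, while the average-subtracting term $-\frac{\beta_0}{\alpha_0}p_c$ and the remaining summands are of higher order and do not affect the leading coefficient. The positivity of $\tilde p(-1)$ and $\hat p(1)$, and the Fubini--Study signs of $s_0$ and $s_\infty$, are precisely where the hypotheses $0<|x_a|<1$, $x_0=1$, $x_\infty=-1$ and the scalar-curvature normalizations $\mathrm{Scal}(g_0)=d_0(d_0+1)$, $\mathrm{Scal}(-g_\infty)=d_\infty(d_\infty+1)$ enter.
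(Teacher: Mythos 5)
Your proposal is correct, but note that the paper itself offers no proof of this lemma: it is imported verbatim as \cite{MT11}, Lemma~4.3, so there is no internal argument to compare against. Your computation is a sound, self-contained verification of everything the citation outsources, and it is essentially the argument one would extract from \cite{MT11}. The key steps all check out: $P(-1)=2p_c(-1)$ is immediate from \eqref{eq:4.9}, and substituting the definitions of $\alpha_0$ and $\beta_0$ from \eqref{eq:4.10} does give the clean cancellation $P(1)=-2p_c(1)$, so the endpoint dichotomies reduce to whether the factors $(1-x_0)^{d_0}$ and $(1+x_\infty)^{d_\infty}$ are present in $p_c(\mp1)$. Your order-of-vanishing bookkeeping is also right: in $P'(t)/2$ only the $a=0$ summand loses a power of $(1+t)$ (all other summands, including $a=\infty$ where $1+x_\infty t=1-t$ equals $2$ at $t=-1$, stay regular), so the leading term at $t=-1$ is $2s_0\tilde p(-1)(1+t)^{d_0}$ after integration, and symmetrically $2s_\infty\hat p(1)(1-t)^{d_\infty}$ at $t=1$. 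The sign determination is the one genuinely convention-dependent point, and you handle it correctly: the paper's normalization ${\rm Scal}(\pm g_a)=\pm d_a s_a$ together with the stated fiberwise Fubini--Study scalar curvatures for $(g_0,\omega_0)$ and $(-g_\infty,-\omega_\infty)$ forces $s_0=d_0+1>0$ and $s_\infty=-(d_\infty+1)<0$, which yields $P^{(d_0)}(-1)=2(d_0+1)\tilde p(-1)\,d_0!>0$ and $\operatorname{sign}P^{(d_\infty)}(1)=(-1)^{d_\infty+1}$ exactly as claimed. The only cosmetic omission is that for $d_0=0$ (resp.\ $d_\infty=0$) the local sign statement near the endpoint follows from continuity rather than from the leading-term expansion, but that is immediate.
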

Combining \eqref{eq:4.3}, \eqref{eq:4.7}, \eqref{eq:4.8}, $\kappa = k z$ and $\overline{\rm Scal } = \beta_0 / \alpha_0$ (cf. \cite[Section 2.2]{ACGT08}), we have
\begin{lemma}
For any admissible GQE metric with the Ricci potential $kz$, the equation
\begin{equation}
F''(z) + k F'(z) = P'(z) \label{eq:4.11}
\end{equation}
holds.
\end{lemma}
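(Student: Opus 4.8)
The plan is to combine the traced Ricci-potential equation \eqref{eq:4.3} with the Laplacian formula \eqref{eq:4.7} and the scalar-curvature formula \eqref{eq:4.8}, and then recognize the resulting right-hand side as $P'(z)$ by differentiating the definition \eqref{eq:4.9}. Since every quantity involved is a function of $z$ alone, the whole argument reduces to an identity of functions on $[-1,1]$, and no genuine PDE analysis is needed.

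First I would feed $\kappa = kz$ into \eqref{eq:4.7}. Taking $S(z) = kz$ so that $S'(z) = k$, this gives
\[
\Delta_\partial \kappa = -\frac{[k F(z)]'}{2 p_c(z)} = -\frac{k F'(z)}{2 p_c(z)}.
\]
Substituting this, together with \eqref{eq:4.8} for ${\rm Scal}_g(\omega)$ and $\overline{\rm Scal} = \beta_0/\alpha_0$, into \eqref{eq:4.3} yields
\[
\frac{1}{2}\left( \sum_{a \in \hat{\mathcal A}} \frac{2 d_a s_a x_a}{1 + x_a z} - \frac{F''(z)}{p_c(z)} \right) - \frac{\beta_0}{\alpha_0} = \frac{k F'(z)}{2 p_c(z)}.
\]

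Next I would clear denominators by multiplying through by $2 p_c(z)$ and collect the $F''$ and $kF'$ terms on one side, obtaining
\[
F''(z) + k F'(z) = 2 p_c(z) \sum_{a \in \hat{\mathcal A}} \frac{d_a s_a x_a}{1 + x_a z} - 2 \frac{\beta_0}{\alpha_0} p_c(z).
\]
Finally, differentiating \eqref{eq:4.9} by the fundamental theorem of calculus — the constant term $2 p_c(-1)$ simply drops out — shows that the right-hand side above is precisely $P'(z)$, which completes the proof.

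There is no serious analytic obstacle here: the lemma is a purely algebraic consequence of identities already established. The only point that requires care is the bookkeeping of constants — keeping the factor $1/2$ and the coefficient $2$ in \eqref{eq:4.8}, the clearing factor $2 p_c(z)$, and the factor $2$ in front of the integral in \eqref{eq:4.9} mutually consistent — and invoking the correspondence $S \mapsto S(z)$ of \cite{KS86} to justify treating $\Delta_\partial \kappa$ as a genuine function of $z$ so that \eqref{eq:4.7} applies. The decisive observation is that the collected scalar-curvature terms match the integrand of \eqref{eq:4.9} on the nose, so that the right-hand side collapses exactly to $P'(z)$ rather than to some mere multiple of it.
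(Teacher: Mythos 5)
Your argument is correct and is exactly the combination the paper itself intends: the lemma is introduced with ``Combining \eqref{eq:4.3}, \eqref{eq:4.7}, \eqref{eq:4.8}, $\kappa = kz$ and $\overline{\rm Scal} = \beta_0/\alpha_0$,'' and your substitution, clearing of the factor $2p_c(z)$, and identification of the right-hand side with $P'(z)$ via differentiation of \eqref{eq:4.9} fills in precisely those details (and matches the proof of Lemma~\ref{lem4.6}, which is the same computation for general $\kappa$ in integrated form). The constants check out, so there is nothing to add.
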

We can give the explicit solution for \eqref{eq:4.11} by
\begin{equation}
F(z) = e^{-kz} \int_{-1}^z P(t) e^{kt} dt \label{eq:4.12}
\end{equation}
under the boundary condition $F(-1) =0$ and $F'(\pm 1) = \mp 2 p_c(\pm 1)$. In order to get a GQE metric defined over $M$, $F$ must satisfy $F(1) = 0$, so,
\begin{equation}
{\rm {\bf MT}}(k) := \int_{-1}^1 P(t) e^{kt} dt \label{eq:4.13}
\end{equation}
is an obstruction to the existence of admissible GQE metrics with the Ricci potential $kz$.
\begin{remark}
\label{rem:4.1}
Clearly, $\alpha_0$, $\beta_0$, $P(t)$ and ${\rm {\bf MT}}(k)$ are independent of the choice of admissible metrics $(g, \omega)$. These quantities depend only on $M$ and the admissible class $\Omega$.
\end{remark}
\begin{lemma}
\label{lem4.6}
For any admissible metric, the equation
\begin{equation}
F'(z) + \kappa'(z) \cdot F(z) = P(z) \label{eq:4.14}
\end{equation}
holds.
\end{lemma}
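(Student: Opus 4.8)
The plan is to turn the defining relation \eqref{eq:4.2} for the Ricci potential into a first-order ODE for $F$ along $[-1,1]$ and then fix the single constant of integration from the boundary behavior at $z=-1$. The key simplification, already available to us, is that for any admissible metric $\kappa$ is a function of $z$ alone, so all the relevant operators reduce to ordinary derivatives in $z$.

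First I would feed $S=\kappa$ into Lemma~\ref{lem:4.2}, obtaining $\Delta_\partial \kappa = -[\kappa'(z)F(z)]'/(2p_c(z))$. Substituting this into the traced equation \eqref{eq:4.3} gives
\[
[\kappa'(z)F(z)]' = 2p_c(z)\bigl({\rm Scal}_g(\omega)-\overline{\rm Scal}\bigr).
\]
Next I would insert the explicit scalar curvature \eqref{eq:4.8} together with $\overline{\rm Scal}=\beta_0/\alpha_0$; after the factor $p_c(z)$ cancels, the right-hand side becomes
\[
2p_c(z)\sum_{a\in\hat{\mathcal A}}\frac{d_a s_a x_a}{1+x_a z} - F''(z) - 2p_c(z)\frac{\beta_0}{\alpha_0}.
\]

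Differentiating the defining integral \eqref{eq:4.9} shows that the first and third terms together are precisely $P'(z)$, so the identity collapses to $[F'(z)+\kappa'(z)F(z)]' = P'(z)$. Integrating once yields $F'(z)+\kappa'(z)F(z)=P(z)+C$ for some constant $C$. To pin down $C$ I would evaluate at $z=-1$: the boundary conditions \eqref{eq:3.5} give $F(-1)=0$ and $F'(-1)=2p_c(-1)$, while \eqref{eq:4.9} gives $P(-1)=2p_c(-1)$, forcing $C=0$ and hence \eqref{eq:4.14}.

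The argument is essentially a bookkeeping exercise, so there is no serious obstacle; the only places demanding care are the signs in the scalar-curvature formula \eqref{eq:4.8} and in $F'(\pm1)=\mp 2p_c(\pm1)$, and the decision to normalize the constant of integration at the endpoint $z=-1$ rather than $z=1$ (the former being where $P$ is anchored by its definition \eqref{eq:4.9}).
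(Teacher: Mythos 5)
Your proposal is correct and follows essentially the same route as the paper: combine \eqref{eq:4.3}, \eqref{eq:4.7} and \eqref{eq:4.8} into the identity $[F'+\kappa'F]'=P'$, integrate from $-1$, and kill the constant using the matching boundary data of $F'$ and $P$ at $z=-1$. Your explicit evaluation at the endpoint (using $F(-1)=0$, $F'(-1)=2p_c(-1)=P(-1)$) just spells out what the paper summarizes as ``$F'$ and $P$ have the same boundary condition.''
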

\begin{proof}
By \eqref{eq:4.3}, \eqref{eq:4.7} and \eqref{eq:4.8}, we have
\begin{equation}
\frac{1}{2} \left( \sum_{a \in \hat{\mathcal A}} \frac{2d_a s_a x_a}{1+x_a z} -\frac{F''(z)}{p_c(z)} \right) - \frac{\beta_0}{\alpha_0} = \frac{[ \kappa'(z) \cdot F(z)]'}{2 p_c(z)} \label{eq:4.15} .
\end{equation}
Multiplying  $2p_c(z)$ to the both hand sides of \eqref{eq:4.15} and integrating on $[-1, z]$, we get
\[
F'(z) + \kappa'(z) \cdot F(z) = P(z) + (\text{const}) .
\]
Since $F'(z)$ and $P(z)$ have the same boundary condition, we have $(\text{const}) = 0$. 
\end{proof}
For any $k \in {\mathbb R}$, let $X_{J}^k$ be a holomorphic vector field with the potential function $kz$, i.e., $X_{J}^k$ satisfies $i_{X_{J}^k} \omega = \sqrt{-1} \bar{\partial}_J kz$, where $J$ is the compatible complex structure induced by an admissible metric.
Since $K$ is the infinitesimal generator of the $U(1)$-action on $M$ and the function $z$ is the moment map of this action, we get $i_K \omega = -dz$. Hence $X_{J}^2 =-JK- \sqrt{-1}K$ and $X_{J}^k = \frac{k}{2} \cdot X_{J}^2 = - \frac{k}{2} (JK + \sqrt{-1}K)$.
\begin{theorem}
\label{the:4.2}
Let $M$ be an admissible bundle and $\Omega$ an admissible class with the admissible data $\{x_a\}$. We assume that $\Omega$ coincides with $2 \pi c_1 (M)$ up to a multiple positive constant. (Hence $M$ is Fano and $c_1(M)$ becomes admissible up to scale automatically.)  Then the following statements hold:\\
(1) If we set $ \Omega = 2 \pi \lambda^{-1} c_1 (M)$ for some (positive) constant $\lambda$, then we have $\lambda = \frac{d_0 + d_{\infty} + 2}{2}$.\\
(2) Tian-Zhu's holomorphic invariant \eqref{eq:4.6} and Maschler-T{\o}nnesen's invariant \eqref{eq:4.13} have a relation
\begin{equation}
{\rm {\bf TZ}}_{ \lambda^{-1} X_{J}^k} (X_{J}^2) = -2 \pi \lambda^m \exp{ \left( - \frac{kC}{2 \lambda} \right )} {\rm Vol} \left( S, \prod_{a \in \hat{\mathcal A}} \frac{\omega_a}{x_a}  \right) {\rm {\bf MT}}(k) \label{eq:4.16}
\end{equation}
as a function of $k$, where $C = d_0 - d_{\infty}$.
\end{theorem}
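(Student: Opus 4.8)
For (1) the cleanest route is a degree computation on a fibre. A generic fibre of $M\to S$ is $\mathbb{CP}^{d_0+d_\infty+1}$ and the base classes $[\omega_a]$, $a\in\mathcal A$, restrict to zero on it, so by \eqref{eq:3.3} the restriction of $\omega$ to the projective line joining $e_0$ to $e_\infty$ inside a fibre is $dz\wedge\theta$, whose integral is $\int_{-1}^1 dz\int\theta=4\pi$. On the other hand the normal bundle of a fibre is trivial, so $c_1(M)$ restricts to $c_1(\mathbb{CP}^{d_0+d_\infty+1})$, which pairs with the line to give $d_0+d_\infty+2$; hence $2\pi c_1(M)$ pairs with the line to give $2\pi(d_0+d_\infty+2)$. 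Comparing the two pairings through $\Omega=2\pi\lambda^{-1}c_1(M)$ forces $\lambda=(d_0+d_\infty+2)/2$.

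The heart of (2) is a relation between $P$ and $p_c$. Writing $C=d_0-d_\infty$, I expect to prove $P(z)=(C-2\lambda z)\,p_c(z)$. To find it I would feed the ansatz $P=-(2\lambda z+a)p_c$ into the derivative of \eqref{eq:4.9}, divide by $p_c$, and expand $2\lambda z=\tfrac{2\lambda}{x_a}(1+x_az)-\tfrac{2\lambda}{x_a}$ in each summand; the $z$-dependence then cancels exactly when $s_a=\lambda/x_a+C/2$ for every $a\in\hat{\mathcal A}$, and the surviving constant reproduces $\overline{\rm Scal}=\beta_0/\alpha_0=m\lambda$. The fibrewise Fubini--Study scalar curvatures $d_0(d_0+1)$ and $d_\infty(d_\infty+1)$, together with the sign convention for $g_\infty$, give $s_0=d_0+1$ and $s_\infty=-(d_\infty+1)$; with $x_0=1$, $x_\infty=-1$ these pin down $a=-C$, and one checks that the anti-canonical data satisfy $s_a=\lambda/x_a+C/2$ for $a\in\mathcal A$ as well. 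This is the relation between $P$ and $p_c$ advertised in the introduction.

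With this in hand I would evaluate the left side of \eqref{eq:4.16} on the metric $\lambda\omega$, which lies in $2\pi c_1(M)=\lambda\Omega$ and may be used by the invariance in Theorem \ref{the:4.1}. Since ${\rm Ric}$ is scale invariant, $(\lambda\omega)^m/m!=\lambda^m\,\omega^m/m!=\lambda^m\,p_c(z)\big(\prod_{a\in\hat{\mathcal A}}\tfrac{\omega_a^{d_a}}{x_a^{d_a}\,d_a!}\big)\wedge dz\wedge\theta$, producing the factors $\lambda^m$, the factor $2\pi$ from $\int\theta$, the volume ${\rm Vol}\big(S,\prod_{a\in\hat{\mathcal A}}\omega_a/x_a\big)$, and reducing the integral to $\int_{-1}^1(\,\cdot\,)\,p_c(z)\,dz$. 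The Killing potentials with respect to $\lambda\omega$ are $\theta_{X_J^2}=2\lambda z+a'$ and $\theta_{\lambda^{-1}X_J^k}=kz+b$; I would fix the constants from the normalization $-\Delta_\partial\theta_V+\theta_V+V(\kappa)=0$ of Theorem \ref{the:4.1}, using \eqref{eq:4.7} for $\Delta_\partial z$ and the L'H\^opital values $F'/2p_c\to d_0+1$ at $z=-1$ and $\to-(d_\infty+1)$ at $z=1$ (which follow from $\Theta'(\pm1)=\mp2$ and the vanishing orders of $p_c$ at the endpoints). Both endpoint evaluations give the same constant, yielding $a'=-C$ and $b=-kC/(2\lambda)$, hence the factor $e^{-kC/(2\lambda)}$. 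Substituting $\theta_{X_J^2}=2\lambda z-C$ and $P=(C-2\lambda z)p_c$ turns the one-dimensional integral into $\int_{-1}^1(2\lambda z-C)e^{kz}p_c\,dz=-\int_{-1}^1 P e^{kz}\,dz=-{\rm {\bf MT}}(k)$, and collecting all factors yields \eqref{eq:4.16}.

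The main obstacle will be the bookkeeping of constants under the rescaling $\omega\mapsto\lambda\omega$: one must track how the moment map, the Laplacian $\Delta_\partial$, and the two Killing potentials transform, and then verify that the two endpoint evaluations of the normalization condition actually agree, which is what makes $a'$ and $b$ honest constants. Making the relation $P=(C-2\lambda z)p_c$ rigorous rather than merely self-consistent also requires knowing that the anti-canonical admissible data are \emph{exactly} those satisfying $s_a=\lambda/x_a+C/2$; I would argue this by observing that this relation makes $P'/p_c$ affine in $z$ with the correct average $\beta_0/\alpha_0=m\lambda$, so the corresponding $F$ built from \eqref{eq:4.12} has the boundary data \eqref{eq:3.5}, and then appeal to the uniqueness of the admissible presentation of $2\pi c_1(M)$.
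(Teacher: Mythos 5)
Your proposal is essentially sound and reaches the same endgame as the paper, but it arrives at the two key inputs by different routes. For (1) you pair $\Omega$ and $2\pi c_1(M)$ against the ${\mathbb CP}^1$ joining $e_0$ to $e_\infty$ inside a fibre; this is correct ($\int dz\wedge\theta=4\pi$ versus $2\pi\lambda^{-1}(d_0+d_\infty+2)$) and arguably cleaner than the paper's argument, which instead determines the additive constant in the normalized potential $\theta_{X_J^2}=2\lambda z-C$ by letting $z\to\pm1$ in the identity $C=\frac{F'}{p_c}+2\lambda z+\kappa'(z)\Theta(z)$ (whose constancy in $z$ is a consequence of \eqref{eq:4.5} for $\lambda\omega\in2\pi c_1(M)$) and equating the two boundary values $-2-2d_\infty+2\lambda=2+2d_0-2\lambda$, thereby obtaining $\lambda$ and $C$ simultaneously. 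For (2), your final integral computation is exactly the paper's. The one place where your route is genuinely weaker is the derivation of $P=(C-2\lambda z)p_c$: matching the partial-fraction expansion of $P'/p_c$ forces $s_a=\lambda/x_a+C/2$ for all $a\in\hat{\mathcal A}$, which you verify only for $a=0,\infty$; for $a\in{\mathcal A}$ this is precisely the characterization of the anti-canonical admissible data, an external fact from ACGT08/MT11 that you only gesture at via ``uniqueness of the admissible presentation,'' and your self-consistency argument runs in the wrong direction (it shows the relation is sufficient, not that the hypothesis forces it). The detour is unnecessary: the normalization computation you already perform to fix $a'=-C$ gives the identity $\frac{F'}{p_c}+2\lambda z+\kappa'(z)\Theta(z)=C$ on all of $[-1,1]$, and multiplying by $p_c$ and invoking Lemma \ref{lem4.6} (namely $F'+\kappa'F=P$, valid for \emph{every} admissible metric) yields $P=(C-2\lambda z)p_c$ in one line --- this is exactly how the paper passes from \eqref{eq:4.17} to \eqref{eq:4.19}. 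Finally, a small conceptual point: the agreement of the two endpoint evaluations is not what \emph{makes} $a'$ and $b$ honest constants; constancy is already guaranteed by \eqref{eq:4.5}, and the endpoint agreement is the consistency condition that pins down $\lambda$.
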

\begin{proof}
In this proof, we consider a fixed admissible metric $g$ whose K{\"a}hler form $\omega$ belongs to $\Omega$.

(1)
Put $g' = \lambda g$ and $\omega' = \lambda \omega$, then $(g', \omega')$ defines a K{\"a}hler structure and $\omega' \in 2 \pi c_1 (M)$. Let $\kappa$ be the Ricci potential of $\omega$. Since the Ricci form is preserved under scaling of $\omega$, $\kappa$ is also the Ricci potential of $\omega'$. In this proof, we promise that $\theta_V$ denotes the potential function of a holomorphic vector field $V$ with respect to $g'$, which is normalized by $-\Delta_{\partial, g'} \theta_V + \theta_V + V(\kappa) = 0$, where $\Delta_{\partial, g'}$ is the  $\partial$-Laplacian with respect to $g'$. We set $\theta_{X_{J}^2} = 2 \lambda z -C$ for some constant $C$, then $C$ is calculated by
\begin{equation}
\begin{split}
C &= -2 \Delta_{\partial, g'} \lambda z + 2 \lambda z + \kappa' (z) \cdot \Theta (z) \\
&= - 2 \Delta_{\partial, g} z + 2 \lambda z + \kappa' (z) \cdot \Theta (z) \\
&= \frac{F'(z)}{p_c (z)} + 2 \lambda z + \kappa' (z) \cdot \Theta (z) \label{eq:4.17} ,
\end{split}
\end{equation}
where we used \eqref{eq:4.7} and $X_{J}^2 (\kappa (z) ) = - JK(\kappa (z) ) = -d ( \kappa (z)) (JK) = \kappa' (z) J dz (K) = \kappa' (z) \cdot \Theta (z) $, and denoted the $\partial$-Laplacian with respect to $g$ by $\Delta_{\partial, g}$. In order to find $C$ as above, we take the limit of $z$ to the boundary.
Since
\begin{equation}
\frac{F'(z)}{p_c (z)} = \Theta' (z) + \Theta (z) \cdot \frac{p_c'(z)}{p_c(z)} =  \Theta' (z) + \Theta (z) \cdot \sum_{a \in \hat{\mathcal A}} \frac{x_a d_a}{1 + x_a z} \label{eq:4.18} ,
\end{equation}
using the boundary condition \eqref{eq:3.4} and de l'H\^opital's rule, we get
\[
\begin{split}
\lim_{z \rightarrow 1} \frac{F'(z)}{p_c (z)} &= -2 + \lim_{z \rightarrow 1} \Theta (z) \cdot \frac{ - d_{\infty} }{1 - z} \\
&= -2 -  \lim_{z \rightarrow 1} \frac{\Theta'(z) d_{\infty}}{-1} = -2-2d_{\infty} .
\end{split}
\]
Similarly,
\[
\lim_{z \rightarrow -1} \frac{F'(z)}{p_c (z)} = 2 + 2 d_0 .
\]
Therefore, combining with \eqref{eq:4.17}, we have
\[
C= -2 -2 d_{\infty} + 2 \lambda = 2+ 2 d_0 -2 \lambda .
\]
Hence we get $C = d_0 - d_{\infty}$ and $ \lambda = \frac{d_0 + d_{\infty} + 2}{2}$.

(2)
From the argument in (1), we have $\theta_{X_{J}^2} = 2 \lambda z -C$ and $\theta_{X_{J}^k} = k \lambda z - \frac{kC}{2}$. Hence by \eqref{eq:4.14} and \eqref{eq:4.17}, we have
\begin{equation}
2\lambda z p_c (z) - C p_c(z) + P(z) = 0 \label{eq:4.19} .
\end{equation}
Hence the direct computation shows that
\[
\begin{split}
{\rm {\bf TZ}}_{\lambda^{-1}X_{J}^k}(X_{J}^2) &= \int ( 2 \lambda z - C ) e^{kz - \frac{kC}{2 \lambda }} \frac{(\lambda \omega)^m}{m!} \\
&= \int ( 2 \lambda z - C) e^{kz - \frac{kC}{2 \lambda }} \lambda^m \cdot p_c(z) \left( \bigwedge_{a \in \hat{\mathcal A}} \frac{(\omega _a / x_a)^{d_a}}{d_a !} \right) dz \wedge \theta \\
&= 2 \pi \lambda^m \exp{ \left( - \frac{kC}{2 \lambda} \right )} {\rm Vol} \left( S, \prod_{a \in \hat{\mathcal A}} \frac{\omega_a}{x_a} \right) \int_{-1}^1 ( 2 \lambda z \cdot p_c(z) - C \cdot p_c (z) ) e^{kz} dz \\
&= -2 \pi \lambda^m \exp{ \left( - \frac{kC}{2 \lambda} \right )} {\rm Vol} \left( S, \prod_{a \in \hat{\mathcal A}} \frac{\omega_a}{x_a} \right) \int_{-1}^1 P(z) e^{kz} dz \\
&= -2 \pi \lambda^m \exp{ \left( - \frac{kC}{2 \lambda} \right )} {\rm Vol} \left( S, \prod_{a \in \hat{\mathcal A}} \frac{\omega_a}{x_a}  \right) {\rm {\bf MT}}(k) ,
\end{split}
\]
where we used the equation $\omega^m /m! = p_c (z) \left( \bigwedge_{a \in \hat{\mathcal A}} \frac{(\omega _a / x_a)^{d_a}}{d_a !} \right) dz \wedge \theta$ (cf. \cite[Section 2.2]{ACGT08}). 
\end{proof}
\begin{corollary}
\label{cor:4.1}
We assume the same as above. Then $\Omega = 2 \pi c_1 (M)$ holds if and only if $d_0 = d_{\infty} =0$, i.e., a blow-down occurs. In this case, we have
\begin{equation}
{\rm {\bf TZ}}_{X_{J}^k} (X_{J}^2) = -2 \pi {\rm Vol} \left( S, \prod_{a \in \hat{\mathcal A}} \frac{\omega_a}{x_a}  \right) {\rm {\bf MT}}(k) \label{eq:4.20}
\end{equation}
for any admissible metrics.
\end{corollary}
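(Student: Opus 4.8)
The plan is to read the corollary off directly from Theorem~\ref{the:4.2}, since the statement is exactly the specialization of that result to the value $\lambda = 1$; no genuinely new computation should be required.

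For the equivalence, I would invoke part (1) of Theorem~\ref{the:4.2}, which gives $\Omega = 2\pi\lambda^{-1}c_1(M)$ with $\lambda = (d_0 + d_{\infty} + 2)/2$. Hence $\Omega = 2\pi c_1(M)$ holds if and only if $\lambda = 1$, i.e. if and only if $d_0 + d_{\infty} = 0$. The only real content here is the observation that $d_0$ and $d_{\infty}$ are nonnegative integers (the fiber dimensions of ${\mathbb P}(E_0)$ and ${\mathbb P}(E_{\infty})$), so $d_0 + d_{\infty} = 0$ forces $d_0 = d_{\infty} = 0$. Geometrically this is precisely the case in which $E_0$ and $E_{\infty}$ are line bundles and $M$ is an honest ${\mathbb CP}^1$-bundle over $S$; then $e_0$ and $e_{\infty}$ are already divisors and the blow-up $\hat M \to M$ is trivial, which is what is meant by saying that a blow-down occurs.

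For the displayed formula, I would substitute $\lambda = 1$ and $C = d_0 - d_{\infty} = 0$ into the relation \eqref{eq:4.16}. In this case $\lambda^{-1}X_{J}^k = X_{J}^k$, $\lambda^m = 1$, and $\exp(-kC/(2\lambda)) = e^0 = 1$, so \eqref{eq:4.16} collapses immediately to \eqref{eq:4.20}. The clause ``for any admissible metrics'' needs no extra argument: by Remark~\ref{rem:4.1} the quantity ${\rm {\bf MT}}(k)$ and the volume factor on $S$ are independent of the choice of admissible metric, while the left-hand side ${\rm {\bf TZ}}$ is independent of the K\"ahler form in $2\pi c_1(M)$ by Theorem~\ref{the:4.1}.

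I expect no serious obstacle, as the corollary is just a reading-off of the two parts of Theorem~\ref{the:4.2} at the distinguished value $\lambda = 1$. The single point worth stating explicitly is the integrality argument that upgrades the numerical identity $\lambda = 1$ to the geometric conclusion $d_0 = d_{\infty} = 0$; everything else is a direct substitution.
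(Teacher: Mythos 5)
Your proposal is correct and follows the same route as the paper: the paper's own proof is just the one-line chain ``$\Omega = 2\pi c_1(M)$ iff $\lambda = 1$ iff $d_0 = d_{\infty} = 0$,'' with the formula \eqref{eq:4.20} read off from \eqref{eq:4.16} by setting $\lambda = 1$ and $C = 0$. You merely spell out the (correct) details -- the integrality of $d_0, d_{\infty}$ and the metric-independence from Remark~\ref{rem:4.1} and Theorem~\ref{the:4.1} -- that the paper leaves implicit.
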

\begin{proof}
$\Omega = 2 \pi c_1 (M)$ holds if and only if $\lambda = 1$ if and only if $d_0 = d_{\infty} =0$. 
\end{proof}
\section{Modified K{\"a}hler-Ricci flow on Admissible bundles}
\label{sect:5}
Let $M$ be an $m \left( := \sum_{a \in \hat{\mathcal A}} d_a +1 \right)$-dimensional admissible bundle and $\Omega$ an admissible class. We assume that $P(t)$ has exactly one root in the interval $(-1,1)$. Then we have the following properties:
\begin{lemma}[\cite{MT11}, Lemma 4.4] \label{lem:5.1}
If the function $P(t)$ has exactly one root in the interval $(-1,1)$, then there exists a unique $k_0 \in {\mathbb R}$ such that ${\rm {\bf MT}}(k_0) =0$. Moreover, for this $k_0$, the function $F(z)$ defined by \eqref{eq:4.12} satisfies $F >0$ on $(-1,1)$, and an admissible GQE metric is naturally constructed from $F$.
\end{lemma}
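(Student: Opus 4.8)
The plan is to split the statement into three parts: the existence and uniqueness of $k_0$, the positivity of $F$, and the reconstruction of the metric. Throughout I would exploit the sign information of Lemma~\ref{lem:4.4}. Since $P$ has exactly one root $t_0$ in $(-1,1)$, and $P>0$ in a right neighborhood of $-1$ while $P<0$ in a left neighborhood of $1$, continuity forces $P>0$ on $(-1,t_0)$ and $P<0$ on $(t_0,1)$; thus $P$ changes sign exactly once, from $+$ to $-$, at $t_0$.

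For the existence and uniqueness of $k_0$, I would not work with ${\rm {\bf MT}}$ directly but with the rescaled function $\Phi(k):=e^{-kt_0}{\rm {\bf MT}}(k)=\int_{-1}^1 P(t)\,e^{k(t-t_0)}\,dt$, which has the same sign and the same zeros as ${\rm {\bf MT}}(k)$ because $e^{-kt_0}>0$. Differentiating under the integral gives $\Phi'(k)=\int_{-1}^1 P(t)(t-t_0)\,e^{k(t-t_0)}\,dt$, and the key observation is that the integrand is pointwise $\le 0$: on $(-1,t_0)$ we have $P>0$ and $t-t_0<0$, while on $(t_0,1)$ we have $P<0$ and $t-t_0>0$. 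Since it is not identically zero, $\Phi'(k)<0$ for every $k$, so $\Phi$ is strictly decreasing and has at most one zero. Examining the limits $k\to\pm\infty$, where the mass of the integral concentrates near $t=1$ (where $P<0$) and near $t=-1$ (where $P>0$) respectively, gives $\Phi(k)\to-\infty$ and $\Phi(k)\to+\infty$; the intermediate value theorem then produces a unique $k_0$ with $\Phi(k_0)=0$, equivalently ${\rm {\bf MT}}(k_0)=0$. The only subtle point here is that this monotonicity argument uses the single-sign-change structure of $P$ in an essential way, which is exactly what the ``one root'' hypothesis buys us.

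For the positivity of $F$, write $G(z):=\int_{-1}^z P(t)e^{k_0 t}\,dt$, so that $F=e^{-k_0 z}G$ has the sign of $G$. Then $G(-1)=0$, $G(1)={\rm {\bf MT}}(k_0)=0$, and $G'(z)=P(z)e^{k_0 z}$ has the sign of $P$. Hence $G$ strictly increases on $(-1,t_0)$ and strictly decreases on $(t_0,1)$; combined with $G(-1)=G(1)=0$ this forces $G>0$, and therefore $F>0$, throughout $(-1,1)$.

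Finally, to extract the metric I would set $\Theta:=F/p_c$ and verify it satisfies \eqref{eq:3.4}. Positivity $\Theta>0$ on $(-1,1)$ follows from $F>0$ and $p_c>0$ there. For the boundary data I would differentiate \eqref{eq:4.12} to get $F'=P-k_0 F$, evaluate at $\pm1$ using $F(\pm1)=0$, and compute from \eqref{eq:4.9}--\eqref{eq:4.10} that $P(-1)=2p_c(-1)$ and $P(1)=-2p_c(1)$, so that $F$ meets the boundary conditions \eqref{eq:3.5}. Once $\Theta$ is known to satisfy \eqref{eq:3.4}, the metric \eqref{eq:3.3} extends smoothly to $M$, and that it is GQE follows cleanly: for this metric Lemma~\ref{lem4.6} gives $F'+\kappa'F=P$, while \eqref{eq:4.12} gives $F'=P-k_0F$; subtracting yields $(\kappa'-k_0)F=0$, so $\kappa'\equiv k_0$ on $(-1,1)$ by positivity of $F$, whence $\kappa=k_0 z$ up to a constant and Lemma~\ref{lem:4.3} applies. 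I expect the genuine obstacle to be the smoothness of $\Theta=F/p_c$ at the endpoints when $d_0$ or $d_\infty>0$: there $p_c$ vanishes (to order $d_0$, resp.\ $d_\infty$), so one must check that $F$ vanishes to exactly one higher order with the correct leading coefficient, forcing $\Theta$ to have a simple zero with $\Theta'(\pm1)=\mp2$. This is precisely the point noted after \eqref{eq:3.5} that $\Theta$ cannot be recovered from a general $F$ satisfying \eqref{eq:3.5}, and I would resolve it using the precise vanishing orders of $P$ at $\pm1$ recorded in Lemma~\ref{lem:4.4} together with the reconstruction argument of \cite{ACGT08,MT11}.
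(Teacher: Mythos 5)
The paper does not prove Lemma~\ref{lem:5.1} at all: it is imported verbatim from \cite{MT11}, Lemma 4.4, so there is no in-paper argument to compare against. Your proof is correct and is essentially the standard Maschler--T{\o}nnesen-Friedman / Tian--Zhu argument. The three pieces all check out: the sign pattern $P>0$ on $(-1,t_0)$, $P<0$ on $(t_0,1)$ follows from Lemma~\ref{lem:4.4} plus the single-root hypothesis; the monotonicity of $\Phi(k)=\int_{-1}^1 P(t)e^{k(t-t_0)}\,dt$ via the pointwise nonpositive integrand $P(t)(t-t_0)e^{k(t-t_0)}$, together with the Laplace-type limits $\Phi(\pm\infty)=\mp\infty$, gives the unique $k_0$; and the increase-then-decrease of $G(z)=\int_{-1}^z P e^{k_0 t}\,dt$ with $G(\pm1)=0$ gives $F>0$. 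The boundary identities $P(-1)=2p_c(-1)$, $P(1)=-2p_c(1)$ also follow from \eqref{eq:4.9}--\eqref{eq:4.10} as you claim. The one place you stop short is the endpoint regularity of $\Theta=F/p_c$, which you correctly flag as the genuine obstacle and defer to \cite{ACGT08,MT11}. To close it yourself you would iterate $F^{(j+1)}(\pm1)=P^{(j)}(\pm1)-k_0F^{(j)}(\pm1)$ to see that $F$ vanishes at $-1$ (say) to order exactly $d_0+1$ with $F^{(d_0+1)}(-1)=P^{(d_0)}(-1)$, and then use the normalization $s_0=d_0+1$ (scalar curvature $d_0(d_0+1)$ of the fiberwise Fubini--Study metric) to check that the leading coefficient yields $\Theta'(-1)=2$; this is where the specific constants $s_a$ enter and it is the only nontrivial input not already contained in your sketch.
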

The assumption for $P(t)$ is always satisfied when $|x_a|$ is sufficiently small for all $a \in {\mathcal A}$ (cf. \cite[Section 5]{MT11}) or $ \Omega = 2 \pi \lambda^{-1} c_1 (M)$ for a positive constant $\lambda$ determined by Theorem \ref{the:4.2}. Actually, we have
\begin{lemma}
\label{lem:5.2}
If we assume $ \Omega = 2 \pi \lambda^{-1} c_1 (M)$ for a positive constant $\lambda$, we have $P(t) = (C-2 \lambda t) p_c (t)$, where $\lambda$ and $C$ are constants determined by Theorem \ref{the:4.2}. Hence $P(t)$ has exactly one root $t = \frac{C}{2 \lambda}$ in the interval $(-1,1)$, and there exists an admissible K{\"a}hler-Ricci soliton.
\end{lemma}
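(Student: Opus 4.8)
The plan is to obtain the factorization of $P$ directly from the proof of Theorem~\ref{the:4.2}, then settle the root count by a sign argument, and finally produce the soliton via Lemma~\ref{lem:5.1}.

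First, I would observe that the hypothesis $\Omega = 2\pi\lambda^{-1}c_1(M)$ is exactly the standing assumption of Theorem~\ref{the:4.2}, so $C = d_0 - d_\infty$ and $\lambda = (d_0+d_\infty+2)/2$, and the relation \eqref{eq:4.19}, namely $2\lambda z\,p_c(z) - C p_c(z) + P(z) = 0$, is available for any admissible metric. Rearranging it yields $P(z) = (C - 2\lambda z)\,p_c(z)$ at once. Since $P$ and $p_c$ depend only on $M$ and $\Omega$ (Remark~\ref{rem:4.1}), this is a genuine polynomial identity and not an artifact of the chosen metric.

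For the root count I would use that the admissible data with $a \in \mathcal A$ satisfy $0 < |x_a| < 1$, so each factor $1 + x_a z$ with $a \in \mathcal A$ is strictly positive on $[-1,1]$, while the fiber factors $(1+z)^{d_0}$ and $(1-z)^{d_\infty}$ (present when $d_0 > 0$, resp. $d_\infty > 0$) are strictly positive on the open interval $(-1,1)$. Hence $p_c(z) > 0$ on $(-1,1)$, and $P(z) = (C-2\lambda z)p_c(z)$ vanishes there precisely where the linear factor does, i.e.\ at $t_0 = C/(2\lambda) = (d_0-d_\infty)/(d_0+d_\infty+2)$. From $|d_0 - d_\infty| \le d_0 + d_\infty < d_0 + d_\infty + 2$ we get $t_0 \in (-1,1)$, so $P$ has exactly one root in $(-1,1)$, as claimed.

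It then remains to exhibit the soliton. Since $P$ has a single root in $(-1,1)$, Lemma~\ref{lem:5.1} furnishes a unique $k_0$ with ${\rm {\bf MT}}(k_0) = 0$ and, via the function $F$ of \eqref{eq:4.12}, an admissible GQE metric $\omega \in \Omega$. To recognize this as a K\"ahler-Ricci soliton I would exploit the proportionality $\Omega = 2\pi\lambda^{-1}c_1(M)$: the K\"ahler form $\omega$ is parallel, hence harmonic, for its own metric, so the unique harmonic representative of $2\pi c_1(M) = \lambda\Omega$ is $\lambda\omega$, giving ${\mathbb H}{\rm Ric}(\omega) = \lambda\omega$. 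Then \eqref{eq:2.3} reads ${\rm Ric}(\omega) - \lambda\omega = L_X\omega$, and passing to $\omega' = \lambda\omega \in 2\pi c_1(M)$ turns this into ${\rm Ric}(\omega') - \omega' = L_{X/\lambda}\omega'$, the K\"ahler-Ricci soliton equation; being a constant multiple of the admissible GQE metric, $\omega'$ is again of admissible type (cf.\ Remark~\ref{rem:3.3}). The one step that needs care is this identification ${\mathbb H}{\rm Ric}(\omega) = \lambda\omega$, which relies on the proportionality of $\Omega$ to $c_1(M)$ together with the harmonicity of the K\"ahler form; the factorization and the root count are then immediate.
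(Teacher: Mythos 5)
Your proposal is correct and follows essentially the same route as the paper, whose proof is the single line ``this follows directly from \eqref{eq:4.19} and Lemma~\ref{lem:5.1}''; you have simply supplied the implicit details (positivity of $p_c$ on $(-1,1)$, the bound $|C|<2\lambda$, and the rescaling $\omega'=\lambda\omega$ turning the GQE equation with ${\mathbb H}{\rm Ric}(\omega)=\lambda\omega$ into the soliton equation), all of which check out.
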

\begin{proof}
This follows directly from \eqref{eq:4.19} and Lemma \ref{lem:5.1}. 
\end{proof}
\begin{example}[Koiso's Example (cf. \cite{TZ02}, Example 4.1)]
\label{exa:5.1}
We consider an admissible bundle $M := {\mathbb CP} ^{l+1} \# \overline{ {\mathbb CP}^{l+1}} = {\mathbb P}({\mathcal O} \oplus {\mathcal O}(1)) \rightarrow {\mathbb CP} ^l$ for $l \geq 1$. Since $b_2({\mathbb CP}^l) =1$, every K{\"a}hler class on $M$ is admissible up to scale (cf. \cite[Remark 2]{ACGT08}), so $c_1 (M)$ is admissible up to scale. Hence Corollary~\ref{cor:4.1} implies that there exists an admissible class $\Omega$ with the admissible data $x \in (-1,1)$ $(x \neq 0)$ such that $\Omega = 2 \pi c_1 (M)$. Then we have
\[
{\rm {\bf MT}}(0) = -2 \int_{-1}^1 t (1+xt)^l dt = - 4 \sum_{i = 1}^{[(l+1)/2]} \binom{l}{2 i-1} \frac{x^{2i-1}}{2i+1} \neq 0 .
\]
This shows that there exists an admissible K{\"a}hler-Ricci soliton with respect to a non-trivial holomorphic vector field.
\end{example}
As is seen in Section \ref{sect:3}, for any $\Theta \in {\mathcal K}_\omega ^{\rm adm}$, there exists a unique fiber-preserving $U(1)$-equivariant diffeomorphism $\Psi$ satisfying \eqref{eq:3.7}. Thus we have an inclusion map
\begin{equation}
{\mathcal K}_\omega ^{\rm adm} \hookrightarrow \{ \text{K{\"a}hler form in $(\Omega ,J_c)$}\} \label{eq:5.1}
\end{equation}
defined by $\Theta \mapsto \Psi^* \omega$, where $(\Omega ,J_c)$ denotes the Dolbeault cohomology class with respect to $J_c$.

First, we consider the case of ${\rm {\bf MT}}(0) = 0 $ for simplicity. In this case, there exists an admissible CSC K{\"a}hler metric in $\Omega$. We consider the evolution equation
\begin{equation}
\frac{\partial}{\partial t} \Psi_t ^* \omega = - {\rm Ric}(\Psi_t ^*\omega) + {\mathbb H}{\rm Ric} (\Psi_t ^* \omega) \label{eq:5.2} ,
\end{equation}
where $\Psi_t$ is a $t$-dependent diffeomorphism defined by \eqref{eq:3.7}.
Let $g_t$ be a $t$-dependent admissible metric and $J_t$ be the compatible complex structure corresponding to $\Psi_t$. Taking the trace of the both hand sides of \eqref{eq:5.2}, we have
\begin{equation}
\frac{\partial}{\partial t} {\rm log} \det (\Psi_t ^* g_t) = -{\rm Scal}(\Psi_t ^* g_t) + \overline{\rm Scal} \label{eq:5.3} .
\end{equation}
Since $\Psi_t : (M,J_c,\Psi_t^*g_t,\Psi_t ^* \omega) \rightarrow (M,J_t,g_t,\omega)$ is a biholomorphic isometry, $\Psi_t$ commutes with ${\rm log} \det g_t $ and ${\rm Scal}(g_t)$. Thus we obtain
\begin{equation}
\frac{\partial}{\partial t} \Psi_t ^* {\rm log} \det g_t = \Psi_t ^* ( - {\rm Scal}( g_t ) + \overline{\rm Scal}) \label{eq:5.4} .
\end{equation}

Now we compute a local Ricci potential ${\rm log} \det g_t $ in a local trivialization of $M^0$. This computation is a special case of \cite[(77)]{ACG06} and essentially same as \cite[Lemma 1.2]{KS86}: we take a local trivialization $( \{w^a\}_{a \in \hat{\mathcal A}}, w )$ of the ${\mathbb C}^*$-bundle $M^0 \rightarrow \hat{S}$ such that $w^a = ( w^{a, 1}, \cdots , w^{a, d_a} )$ is a local coordinate system of $S_a$ for each $a \in \hat{\mathcal A}$ and $\frac{\partial}{\partial w} = -J_t K- \sqrt{-1}K$. Then we have
\[
g_t \left( \frac{\partial}{\partial w}, \frac{\partial}{\partial \bar{w}} \right) = 2 \Theta_t (z) , \;
g_t \left( \frac{\partial}{\partial w^{a, i}}, \frac{\partial}{\partial \bar{w}} \right) = 2 \frac{\partial z}{\partial w^{a, i}} ,
\]
\[
g_t \left( \frac{\partial}{\partial w^{a, i}}, \frac{\partial}{\partial w^{b, \bar{j}}} \right) = \frac{2}{\Theta_t (z)} \cdot \frac{\partial z}{\partial w^{a, i}} \cdot \frac{\partial z}{\partial w^{b, \bar{j}}} \; \; ( a \neq b )
\]
and
\[
\begin{split}
g_t \left( \frac{\partial}{\partial w^{a, i}}, \frac{\partial}{\partial w^{a, \bar{j}}} \right) = \frac{1+ x_a z}{x_a} g_a \left( \frac{\partial}{\partial w^{a, i}}, \frac{\partial}{\partial w^{a, \bar{j}}} \right) \\
+ \frac{2}{\Theta_t (z)} \cdot \frac{\partial z}{\partial w^{a, i}} \cdot \frac{\partial z}{\partial w^{a, \bar{j}}} ,
\end{split}
\]
where $i,j = 1, \cdots d_a ; a \in \hat{\mathcal A}$ and $a, b \in \hat{\mathcal A}$. Hence we can compute ${\rm log} \det g_t $ as
\begin{equation}
\begin{split}
{\rm log} \det g_t &= {\rm log} \left( 2 \Theta_t (z) \cdot p_c (z) \cdot \prod_{a \in \hat{\mathcal A}} \det ( g_a / x_a) \right) \\
&= {\rm log} \Theta_t (z) + {\rm log} p_c (z) + \sum_{a \in \hat{\mathcal A}} {\rm log} \det ( g_a / x_a) \label{eq:5.5} .
\end{split}
\end{equation}

Let $V_t$ be the $t$-dependent real vector field corresponding to the $t$-dependent diffeomorphism $\Psi_t$. Then the left hand side of \eqref{eq:5.4} is computed by
\begin{equation}
\begin{split}
\frac{\partial}{\partial t} \Psi_t ^* {\rm log} \det g_t &= \frac{\partial}{\partial t} \Psi_t ^* \left( {\rm log} \Theta_t (z) + {\rm log} p_c (z) + \sum_{a \in \hat{\mathcal A}} {\rm log} \det ( g_a / x_a ) \right) \\
&=  \Psi_t ^* \left( V_t ( {\rm log} \Theta_t (z) ) + \frac{\partial}{\partial t} {\rm log} \Theta_t (z) + V_t( {\rm log} p_c (z) ) \right) \\
&=  \Psi_t ^* \left( \frac{\Theta_t ' (z)}{\Theta_t (z)} \cdot V_t (z) + \frac{1}{\Theta_t (z)} \cdot \frac{d \Theta_t }{dt} (z) + \frac{p_c' (z)}{p_c (z)} \cdot V_t (z)  \right) , \label{eq:5.6}
\end{split}
\end{equation}
where $\frac{d}{d t}$ denotes the partial derivative in $t$ for a function of $z$ and $t$.
\begin{lemma}
\label{lem:5.3}
The equation
\begin{equation}
V_t (z) = - \Theta_t (z) \cdot \frac{d y_t}{dt} (z) \label{eq:5.7}
\end{equation}
holds, where $y_t$ is the function with respect to $\Theta_t$ defined by \eqref{eq:3.7}.
\end{lemma}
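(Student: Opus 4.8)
The plan is to differentiate the defining relation $\Psi_t^* y_t = y_c$ from \eqref{eq:3.7} in $t$ and read off $V_t(z)$. The key observation is that $y_c = u_c'(z)$ is built from the fixed canonical symplectic potential $u_c$ and the fixed moment map $z$, hence is independent of $t$, whereas the left-hand side carries all of the $t$-dependence through both $\Psi_t$ and $y_t$.

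First I would apply the material-derivative formula for the pullback along a time-dependent diffeomorphism, namely $\frac{\partial}{\partial t}\Psi_t^* f_t = \Psi_t^*\bigl(V_t f_t + \frac{\partial f_t}{\partial t}\bigr)$, with $V_t$ the generating vector field of $\Psi_t$. This is precisely the formula already used in \eqref{eq:5.6}, so the sign convention for $V_t$ is the same one fixed there. Taking $f_t = y_t$ and using that the right-hand side $y_c$ is $t$-independent, I obtain $0 = \Psi_t^*\bigl(V_t(y_t) + \frac{dy_t}{dt}(z)\bigr)$, where $\frac{dy_t}{dt}(z)$ denotes the partial $t$-derivative of $y_t$ regarded as a function of $(z,t)$.

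Next I would evaluate the spatial term. For each fixed $t$ the function $y_t = u_t'(z)$ depends only on $z$, so by the chain rule $V_t(y_t) = u_t''(z)\,V_t(z)$, and the defining property $u_t''(z) = 1/\Theta_t(z)$ of the symplectic potential turns this into $V_t(z)/\Theta_t(z)$. Substituting gives $0 = \Psi_t^*\bigl(V_t(z)/\Theta_t(z) + \frac{dy_t}{dt}(z)\bigr)$; since $\Psi_t$ is a diffeomorphism of $M^0$ (extending to $M$), the map $\Psi_t^*$ is injective on functions, so the bracketed expression vanishes identically, yielding $V_t(z) = -\Theta_t(z)\frac{dy_t}{dt}(z)$.

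The computation itself is short, and the care required is mostly bookkeeping: one must pin down the convention for the generating field $V_t$ so that the differentiation formula agrees with the one used in \eqref{eq:5.6}, and one must justify writing $V_t(y_t) = u_t''(z)V_t(z)$, i.e.\ that $y_t$ is a function of $z$ alone so the chain rule applies, before invoking $u_t'' = 1/\Theta_t$. It is also worth noting that the identity is derived on $M^0$, where the fiber coordinates and $\Psi_t$ are defined, and that $\Psi_t$ acts vertically (fixing the base), which is what makes $\Psi_t^*$ injective and lets it be cancelled. I do not expect a genuine obstacle here; the only place to be watchful is the sign, since an error in the convention for $V_t$ would flip the sign of the conclusion.
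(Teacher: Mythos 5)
Your proposal is correct and follows essentially the same route as the paper: differentiate $\Psi_t^*y_t=y_c$ in $t$ to get $V_t(y_t(z))+\frac{dy_t}{dt}(z)=0$, then convert $V_t(y_t(z))$ into $V_t(z)/\Theta_t(z)$. The paper phrases that last step via the form identity $dz=\Theta_t(z)\,d(y_t(z))$ coming from $d^c_{J_t}(y_t(z))=\theta$, while you use the chain rule with $y_t'=u_t''=1/\Theta_t$; these are the same fact, so there is no substantive difference.
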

\begin{proof}
Differentiating \eqref{eq:3.7} in $t$ implies
\[
V_t (y_t (z) ) + \frac{d y_t}{dt} (z) = 0 .
\]
Since $d_{J_t} ^c (y_t (z)) = y_t ' (z) J_t dz = \theta$, we obtain $dz = - \Theta_t (z) J_t \theta = \Theta_t (z) d (y_t (z) ) $. Hence we have $V_t (z) =dz(V_t) = \Theta_t (z) V_t (y_t (z) ) = \Theta_t (z) \cdot \left( - \frac{d y_t}{d t} (z) \right) = - \Theta_t (z) \cdot \frac{d y_t}{d t} (z)$. 
\end{proof}
Differentiating \eqref{eq:5.7} in $z$, we have
\begin{equation}
\begin{split}
(V_t (z) )' &= - \Theta_t ' (z) \cdot \frac{d y_t}{dt} (z) - \Theta_t (z) \cdot \frac{d}{dt} \left( \frac{1}{\Theta_t} \right) (z) \\
&= - \Theta_t ' (z) \cdot \frac{d y_t}{dt} (z) + \frac{1}{\Theta_t (z)} \cdot \frac{d \Theta_t }{dt} (z) \label{eq:5.8} .
\end{split}
\end{equation}
From \eqref{eq:5.6}, \eqref{eq:5.7} and \eqref{eq:5.8}, we obtain
\begin{equation}
\frac{\partial}{\partial t} \Psi_t ^* {\rm log} \det g_t = \Psi_t ^* \left( (V_t (z))' + \frac{p_c ' (z)}{p_c (z)} \cdot V_t (z) \right) \label{eq:5.9} .
\end{equation}
From \eqref{eq:4.8}, \eqref{eq:5.4}, \eqref{eq:5.9} and  $\overline{\rm Scal} = \beta_0 / \alpha_0$, we get
\[
(V_t (z))' + V_t (z) \cdot \frac{p_c '(z)}{p_c(z)} = - \frac{1}{2} \left( \sum_{a \in \hat{\mathcal A}} \frac{2d_a s_a x_a}{1+x_a z} -\frac{F_t''(z)}{p_c(z)} \right) + \frac{\beta_0}{\alpha_0} ,
\]
where $F_t (z) = \Theta_t (z) \cdot p_c (z)$.
Multipling $2p_c (z)$ and using \eqref{eq:4.9}, we have
\[
2 [V_t (z) p_c(z)]' = - P'(z) + F_t''(z) .
\]
Integrating on the interval $[-1, z]$, this can be written as
\[
2 V_t (z) p_c(z) = - P (z) + F_t'(z) + (\text{const}) .
\]
Since $\Psi_t$ preserves each fiber and fixes any point on the critical set $e_0 \cup e_{\infty}$, we have $V_t (z) \equiv 0$ on $e_0 \cup e_{\infty}$. Moreover, $P$ and $F ' _t$ have the same boundary condition, which yields that
\begin{equation}
2 V_t (z) p_c(z) = - P (z) + F_t'(z) \label{eq:5.10} .
\end{equation}
This is a PDE for a $t$-dependent function $\Theta_t \in {\mathcal K}_\omega ^{\rm adm}$ defined on $[-1,1]$, which is equivalent to \eqref{eq:5.2}.

Now we consider the general case. Let $k_0$ be a real constant such that ${\rm \bf{MT}} (k_0) = 0$. Then there exists an admissible GQE metric $\Theta_{\infty}$ with respect to a holomorphic vector field $X_{J_{\infty}} ^{k_0} = - \frac{k_0}{2} ( J_{\infty} K + \sqrt{-1} K)$, where $J_{\infty}$ denotes the compatible complex structure with $\Theta_{\infty}$. We will also use the notation $\infty$ for the quantities corresponding to $\Theta_{\infty}$ ($\Psi_{\infty}$, $g_{\infty}$, $F_{\infty}$, etc.). Then $\Psi_{\infty}^* g_{\infty}$ is a GQE metric with respect to a holomorphic vector field $X_{J_c} ^{k_0} := \Psi_{\infty} ^{-1} {}_* X_{J_{\infty}} ^{k_0} = - \frac{k_0}{2} ( J_c K + \sqrt{-1} K)$. We consider the evolution equation
\begin{equation}
\frac{\partial}{\partial t} \Psi_t ^* \omega = - {\rm Ric}(\Psi_t ^*\omega) + {\mathbb H}{\rm Ric} (\Psi_t ^* \omega) + L_{X_{J_c} ^{k_0} } \Psi_t ^* \omega \label{eq:5.11} .
\end{equation}
Taking the trace of both hand sides of \eqref{eq:5.11} with respect to $\Psi_t ^* g_t$ yields
\[
\frac{\partial}{\partial t} {\rm log} \det \Psi_t ^* g_t = -{\rm Scal}(\Psi_t ^* g_t ) + \overline{\rm Scal} + \left( L_{X_{J_c} ^{k_0} } \Psi_t ^* \omega , \Psi_t ^* \omega \right)_{\Psi_t ^* g_t} .
\]
Hence we have
\begin{equation}
\frac{\partial}{\partial t} \Psi_t ^* {\rm log} \det g_t = \Psi_t ^* \left( - {\rm Scal}( g_t ) + \overline{\rm Scal} + \left( L_{X_{J_t} ^{k_0} } \omega , \omega \right)_{g_t} \right) \label{eq:5.12} ,
\end{equation}
where we put $X_{J_t} ^{k_0} := \Psi_t {}_* X_{J_c} ^{k_0} =  - \frac{k_0}{2} ( J_t K + \sqrt{-1} K)$. On the other hand,
\[
\begin{split}
L_{X_{J_t} ^{k_0} } \omega &= -\frac{k_0}{2} di_{J_t K} \omega = \frac{k_0}{2} d(\Theta_t (z) \cdot \theta) \\
&= \frac{k_0}{2} ( \Theta_t ' (z) dz \wedge \theta + \Theta_t (z) \cdot d \theta ) .
\end{split}
\]
Hence we can calculate $\left( L_{X_{J_t} ^{k_0} } \omega , \omega \right)_{g_t}$ as
\begin{equation}
\begin{split}
\left( L_{X_{J_t} ^{k_0} } \omega , \omega \right)_{g_t} &= \frac{k_0}{2} ( \Theta_t ' (z) dz \wedge \theta + \Theta_t (z) \cdot d \theta , \omega )_{g_t} \\
&= \frac{k_0}{2} \Theta_t ' (z) ( dz \wedge \theta , \omega )_{g_t} + \frac{k_0}{2} \Theta_t (z) \left( \sum_{a \in \hat{\mathcal A}} \omega_a , \omega \right)_{g_t} \\
&= \frac{k_0}{2} \Theta_t ' (z) + \frac{k_0}{2} \Theta_t (z) \cdot \sum_{a \in \hat{\mathcal A}} \frac{x_a d_a}{1+x_a z} \\
&= \frac{k_0}{2} \Theta_t ' (z) + \frac{k_0}{2} \Theta_t (z) \cdot \frac{p_c ' (z)}{p_c (z)} \label{eq:5.13} .
\end{split}
\end{equation}
Combining \eqref{eq:5.9}, \eqref{eq:5.12} and \eqref{eq:5.13}, we obtain
\[
2 [V_t (z) p_c(z)]' = - P'(z) + F_t''(z) + k_0 F_t ' (z) .
\]
Since $F_t (\pm 1) = 0$, we get the equation
\[
2V_t (z) p_c(z) = - P(z) + F_t'(z) + k_0 F_t (z) .
\]
Summarizing the above, we obtain the following theorem:
\begin{theorem}
\label{the:5.1}
Let $M$ be an $m \left( := \sum_{a \in \hat{\mathcal A}} d_a +1 \right)$-dimensional admissible bundle and $\Omega$ an admissible class on $M$. We assume that $P(t)$ has exactly one root in the interval $(-1,1)$. Then for any fixed symplectic form $\omega$ defined by \eqref{eq:3.3}, the modified K{\"a}hler-Ricci flow \eqref{eq:5.11} can be reduced to
\begin{equation}
2V_t (z) p_c(z) = - P(z) + F_t'(z) + k_0 F_t (z) \label{eq:5.14}
\end{equation}
for $\Theta_t \in {\mathcal K}_\omega ^{\rm adm}$. Here $F_t (z) = \Theta_t (z) \cdot p_c (z)$ and $V_t$ is the $t$-dependent real vector field corresponding to the $t$-dependent fiber-preserving $U(1)$-equivariant diffeomorphism $\Psi_t$ defined by \eqref{eq:3.7}, and $V_t (z)$ is calculated by \eqref{eq:5.7}.
\end{theorem}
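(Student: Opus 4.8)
The plan is to exploit the symplectic point of view of Section~\ref{sect:3}: rather than evolving the complex structure while keeping $\omega$ fixed, I fix the K\"ahler form $\omega$ and the reference complex structure $J_c$ and encode the entire flow in the one-variable profile $\Theta_t \in {\mathcal K}_\omega^{\rm adm}$. The first step is to pass from \eqref{eq:5.11} to its traced (local Ricci potential) form, which is legitimate by the equivalence established in Section~\ref{sect:2}: taking the metric trace of \eqref{eq:5.11} against $\Psi_t^* g_t$ and using that $\Psi_t\colon(M,J_c,\Psi_t^*g_t,\Psi_t^*\omega)\to(M,J_t,g_t,\omega)$ is a biholomorphic isometry, one obtains \eqref{eq:5.12}, in which the geometric operators commute with $\Psi_t^*$. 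Everything then reduces to matching the two sides of \eqref{eq:5.12} as functions of $z$ on $[-1,1]$.

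For the left-hand side I would compute $\log\det g_t$ explicitly in a local trivialization of $M^0\to\hat{S}$, obtaining \eqref{eq:5.5}, in which only the terms $\log\Theta_t(z)$ and $\log p_c(z)$ are non-constant. Differentiating $\Psi_t^*\log\det g_t$ in $t$ then splits into the explicit $t$-dependence of $\Theta_t$ and the motion of $\Psi_t$, the latter recorded by the real vector field $V_t$. The key technical input is Lemma~\ref{lem:5.3}, the identity $V_t(z)=-\Theta_t(z)\frac{dy_t}{dt}(z)$, obtained by differentiating the defining relations \eqref{eq:3.7} of $\Psi_t$ in $t$; substituting it together with its $z$-derivative \eqref{eq:5.8} makes the two contributions collapse into the compact form \eqref{eq:5.9}, namely $(V_t(z))'+(p_c'/p_c)\,V_t(z)$.

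For the right-hand side I would insert the admissible scalar curvature formula \eqref{eq:4.8} and $\overline{\rm Scal}=\beta_0/\alpha_0$, and separately evaluate the extra term $(L_{X_{J_t}^{k_0}}\omega,\omega)_{g_t}$. Writing $L_{X_{J_t}^{k_0}}\omega=\tfrac{k_0}{2}\,d(\Theta_t(z)\,\theta)$ and pairing against $\omega$ term by term gives \eqref{eq:5.13}. Equating the two sides, multiplying through by $2p_c(z)$, and recalling $P$ from \eqref{eq:4.9} and $F_t=\Theta_t p_c$, the identity rearranges into the total $z$-derivative $2[V_t(z)p_c(z)]'=-P'(z)+F_t''(z)+k_0F_t'(z)$, which integrates over $[-1,z]$ to \eqref{eq:5.14} up to a constant.

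The only genuinely non-mechanical points are the legitimacy of the global pullback and the determination of that constant. Since $\Psi_t$ preserves each fibre and fixes $e_0\cup e_\infty$ pointwise (as arranged in Section~\ref{sect:3}, where $\Psi_t$ extends across the exceptional set to a diffeomorphism of all of $M$), one has $V_t\equiv0$ at $z=\pm1$; combined with $F_t(\pm1)=0$ from \eqref{eq:3.4} and the matching boundary values $P(\mp1)=\pm2p_c(\mp1)=F_t'(\mp1)$ (which follow by comparing \eqref{eq:4.9} with \eqref{eq:3.5} via the constants \eqref{eq:4.10}), the constant vanishes and \eqref{eq:5.14} holds exactly. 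The case ${\rm {\bf MT}}(0)=0$, equation \eqref{eq:5.10}, is the same computation with the Lie-derivative term absent and serves as the model; the main obstacle is simply keeping Lemma~\ref{lem:5.3} and this boundary bookkeeping straight.
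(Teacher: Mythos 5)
Your proposal is correct and follows essentially the same route as the paper: trace \eqref{eq:5.11} to \eqref{eq:5.12}, compute the local Ricci potential \eqref{eq:5.5}, use Lemma~\ref{lem:5.3} and \eqref{eq:5.8} to obtain \eqref{eq:5.9}, insert \eqref{eq:4.8} and \eqref{eq:5.13}, and integrate the resulting total derivative with the boundary conditions $V_t(\pm1)=0$, $F_t(\pm1)=0$, $P(\mp1)=F_t'(\mp1)$ to kill the constant. No substantive differences from the paper's argument.
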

We define a $t$-dependent function $\varphi_t$ by $\Theta_t = (1+ \varphi _t ) \Theta_{\infty}$. Combining \eqref{eq:5.14} with $F_{\infty} ' (z) + k_0 F_{\infty} (z) = P(z)$, we get
\begin{equation}
\begin{split}
2 \Theta_{\infty} \frac{d \varphi_t}{dt} &= \Theta_{\infty} \Theta_t \varphi _t'' - (\Theta_{\infty} \varphi_t ' )^2 + \frac{P}{p_c} \cdot \Theta_{\infty} \varphi_t ' \\
&+ \left( \Theta_{\infty} \left( \frac{P}{p_c} \right) '  - \left( \frac{P}{p_c} \right) \Theta_{\infty} ' \right) (1+ \varphi_t) \varphi_t \label{eq:5.15} ,
\end{split}
\end{equation}
where we remark that $\frac{P}{p_c } = - 2 \Delta_{\partial} z + \kappa' \cdot  \Theta $ is smooth on $[-1,1]$.
Guan \cite[Section 11]{Guan07} studied the modified K{\"a}hler-Ricci flow on a certain class of completions of ${\mathbb C}^*$-bundles introduced by Koiso and Sakane \cite{KS86}, and derived the evolution equation of the same type as \eqref{eq:5.15}. He also suggested that under the condition
\begin{equation}
\Theta_{\infty} \left( \frac{P}{p_c} \right) '  - \left( \frac{P}{p_c } \right) \Theta_{\infty} ' < 0 \; \text{on} \; [-1,1], \label{eq:5.16}
\end{equation}
any convergent solution of \eqref{eq:5.15} decays to $0$ in exponential order. Although we can prove the desired result by the maximum principle as in \cite{Koi90}, it is a difficult problem to check whether $\Omega$ satisfies \eqref{eq:5.16} or not in general cases. Hence we consider this problem only in some special situations.
\begin{lemma}
\label{lem:5.4}
We assume that $P(t)$ has exactly one root in the interval $(-1,1)$ and $(\log|P|)'' < 0$ on the complement of the zero-set of $P$ in $(-1,1)$. Then \eqref{eq:5.16} holds.
\end{lemma}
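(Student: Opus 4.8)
The plan is to show that the left-hand side of \eqref{eq:5.16} is, up to a positive factor on the open interval, exactly $(\log F_{\infty})''$, and then to prove that $F_{\infty}$ is log-concave. First I would rewrite, using $F_{\infty}=\Theta_{\infty}p_c$,
\[
\Theta_{\infty}\left(\frac{P}{p_c}\right)' - \left(\frac{P}{p_c}\right)\Theta_{\infty}'
= \Theta_{\infty}^2\left(\frac{P}{p_c\,\Theta_{\infty}}\right)'
= \Theta_{\infty}^2\left(\frac{P}{F_{\infty}}\right)'.
\]
For the GQE metric $\kappa=k_0z$, so \eqref{eq:4.14} reads $F_{\infty}'+k_0F_{\infty}=P$, whence $P/F_{\infty}=(\log F_{\infty})'+k_0$ and $(P/F_{\infty})'=(\log F_{\infty})''$. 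Thus on $(-1,1)$, where $\Theta_{\infty}>0$, condition \eqref{eq:5.16} is equivalent to $(\log F_{\infty})''<0$. Setting $\Phi(z):=e^{k_0z}F_{\infty}(z)=\int_{-1}^z P(t)e^{k_0t}\,dt$ (cf. \eqref{eq:4.12}), we have $\log F_{\infty}=\log\Phi-k_0z$, so it suffices to prove that $\Phi$ is log-concave on $(-1,1)$.

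Next I would set up a Riccati comparison. Let $z_0$ be the unique root of $P$ in $(-1,1)$; by Lemma \ref{lem:4.4}, $P>0$ on $(-1,z_0)$ and $P<0$ on $(z_0,1)$, while $\Phi>0$ on $(-1,1)$ with $\Phi(\pm1)=0$. Write $q:=\Phi'=Pe^{k_0z}$, $w:=\Phi'/\Phi=q/\Phi$, and $\sigma:=q'/q=(\log|q|)'$; on each of the two sign-intervals the hypothesis gives $\sigma'=(\log|q|)''=(\log|P|)''<0$. A direct computation yields the identity $w'=w(\sigma-w)$, so that $(\log\Phi)''=w'$. Hence on $(-1,z_0)$, where $w>0$, I must establish $w>\sigma$, and on $(z_0,1)$, where $w<0$, I must establish $\sigma>w$.

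To prove these I would use a touching argument anchored at the endpoints. From the orders of vanishing of $P$ at $\pm1$ recorded in Lemma \ref{lem:4.4}, a short asymptotic computation (conveniently via $w-\sigma=(\log(\Phi/q))'$) gives $w-\sigma\sim 1/(1+z)>0$ as $z\to-1^+$ and $\sigma-w\sim 1/(1-z)>0$ as $z\to1^-$, so the desired inequality holds near each endpoint. To propagate it, suppose on $(-1,z_0)$ there is a first point $z_1$ with $w(z_1)=\sigma(z_1)$; then $w-\sigma$ decreases to $0$ there, forcing $(w-\sigma)'(z_1)\le0$, whereas $(w-\sigma)'(z_1)=w'(z_1)-\sigma'(z_1)=-\sigma'(z_1)>0$, a contradiction. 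Running the same argument from the right on $(z_0,1)$ rules out a touching point there. Therefore $w'<0$, i.e. $(\log\Phi)''<0$, on $(-1,1)\setminus\{z_0\}$; at a simple root $z_0$ one has $(\log\Phi)''(z_0)=q'(z_0)/\Phi(z_0)=P'(z_0)e^{k_0z_0}/\Phi(z_0)<0$ (and for a higher-order root strictness at the isolated point $z_0$ follows by continuity).

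Finally I would treat the endpoints $z=\pm1$ directly, since the factor $\Theta_{\infty}^2$ degenerates there. As $P/p_c$ is smooth, \eqref{eq:5.16} at $z=1$ equals $-(P/p_c)(1)\,\Theta_{\infty}'(1)=2(P/p_c)(1)$, and from $F_{\infty}'+k_0F_{\infty}=P$ together with the l'H\^opital limit computed in the proof of Theorem \ref{the:4.2} (see \eqref{eq:4.18}) one gets $(P/p_c)(1)=\lim_{z\to1}F_{\infty}'/p_c=-2-2d_{\infty}<0$; likewise at $z=-1$ the value is $-2(P/p_c)(-1)=-2(2+2d_0)<0$. Combining the interior and boundary estimates yields \eqref{eq:5.16} on all of $[-1,1]$. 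I expect the main obstacle to be the interior step: converting the hypothesis $(\log|P|)''<0$ into log-concavity of the antiderivative $\Phi$ across the sign change of $P$, which is precisely what the Riccati identity $w'=w(\sigma-w)$ combined with the maximum-principle touching argument accomplishes; the only delicate points are the endpoint asymptotics and the non-generic possibility of a higher-order root at $z_0$.
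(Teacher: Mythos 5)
Your proposal is correct and is essentially the paper's own argument: the paper likewise reduces \eqref{eq:5.16} on $(-1,1)$ to the positivity of $\xi^2-\eta\,\xi'$ with $\xi=Pe^{k_0t}$ and $\eta=\int_{-1}^t\xi\,ds$ (which is exactly your $(\log\Phi)''<0$), obtains the boundary values $-4(d_0+1)$ and $-4(d_\infty+1)$ by de l'H\^opital, checks the sign at the root $t_0$ separately, and disposes of the two subintervals by invoking \cite[Lemma 3.1]{Koi90} --- precisely the Riccati identity $w'=w(\sigma-w)$ plus endpoint-anchored touching argument that you write out in full. The one blemish is your parenthetical about a higher-order root at $z_0$: continuity only yields $(\log\Phi)''(z_0)\le 0$ there (and in fact $=0$ when $P'(z_0)=0$), so strictness genuinely requires the root to be simple, but the paper's proof makes the same tacit assumption when it asserts $\xi'(t_0)<0$.
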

\begin{proof}
Put
\[
\xi (t) = P(t) e^{k_0 t} \;\; \text{and} \;\; \eta (t) = \int_{-1}^t \xi (s) ds .
\]
Then we have $\Theta_{\infty} = \frac{e^{-k_0 z}}{p_c} \eta $ and
\[
\left( \Theta_{\infty} \left( \frac{P}{p_c} \right) '  - \left( \frac{P}{p_c } \right) \Theta_{\infty} ' \right) \cdot e^{k_0 z} p_c = - ( \xi^2 - \eta \cdot \xi' ) \cdot \frac{e^{-k_0 z}}{p_c } .
\] 
By de l'H\^opital's rule, we obtain $\Theta_{\infty} \left( \frac{P}{p_c} \right) '  - \left( \frac{P}{p_c } \right) \Theta_{\infty} ' = -4 (d_{\infty} +1 ) <0$ at $t=1$ and $\Theta_{\infty} \left( \frac{P}{p_c} \right) '  - \left( \frac{P}{p_c } \right) \Theta_{\infty} ' = -4(d_0 + 1) < 0$ at $t=-1$. Hence it suffices to prove that $\xi^2 - \eta \xi' > 0$ on $(-1,1)$. Let $t=t_0$ be the unique root of $P(t)$ in $(-1,1)$, then $\xi(t_0)=0$. Since $P(t)$ has exactly one root in $(-1,1)$, we have $\xi' (t_0) < 0$, $\eta >0$ on $(-1,1)$ and $\eta= 0$ at $t = \pm 1$. Hence we obtain $\xi^2 - \eta \xi' > 0$ at $t= t_0$. Hence we may consider only on the interval $(t_0, 1)$ (a similar proof works on $(-1, t_0)$). One can prove the desired result by the same argument as in \cite[Lemma 3.1]{Koi90}. 
\end{proof}
\begin{remark}
\label{rem:5.1}
If $P(t)$ is a product of polynomials of first order, clearly we have $(\log|P|)'' < 0$ on the complement of the zero-set of $P$ in $(-1,1)$.
\end{remark}
Now we give examples of admissible classes which satisfies \eqref{eq:5.16}.
\begin{example}
\label{exa:5.2}
We assume that $\Omega := 2 \pi \lambda^{-1} c_1 (M)$ is admissible. Then by Lemma~\ref{lem:5.2}, we have $P(t) = (C-2 \lambda t) p_c (t)$ and $(\log|P|)'' < 0$ holds on the complement of the zero-set of $P$ in $(-1,1)$. Hence $\Omega$ satisfies \eqref{eq:5.16} by Lemma~\ref{lem:5.4}.
\end{example}
\begin{example}
\label{exa:5.3}
Let $\Omega$ be an admissible class on $M$ with the admissible data $\{ x_a \}$. Then $\Omega$ satisfies \eqref{eq:5.16} if $|x_a|$ is sufficiently small for all $a \in {\mathcal A}$.
\end{example}
This statement follows from Lemma~\ref{lem:5.4} and the next lemma.
\begin{lemma}
\label{lem:5.5}
Let $\Omega$ be an admissible class on $M$ with the admissible data $\{ x_a \}$. Then $( \log |P|)'' $ is negative on the complement of the zero-set of $P$ in $(-1,1)$ if $|x_a|$ is sufficiently small for all $a \in {\mathcal A}$.
\end{lemma}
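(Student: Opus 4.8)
The plan is to recast the inequality $(\log|P|)'' < 0$ as the positivity of a polynomial. Indeed, on the complement of the zero-set of $P$ one has $(\log|P|)'' = (P P'' - (P')^2)/P^2$, so the claim is equivalent to $(P')^2 - P P'' > 0$ there. Trying to deduce this directly from the limit $|x_a| \to 0$ is hopeless near the endpoints $z = \pm 1$, where $P$ vanishes to orders $d_0$ and $d_{\infty}$ (Lemma~\ref{lem:4.4}) and $(\log|P|)''$ blows up; the whole point of the argument is to separate off this fiber contribution first.

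First I would factor $p_c(t) = q(t)\,\rho(t)$ with $q(t) = (1+t)^{d_0}(1-t)^{d_{\infty}}$ and $\rho(t) = \prod_{a \in \mathcal A}(1+x_a t)^{d_a}$. By Lemma~\ref{lem:4.4}, $P$ vanishes to exact order $d_0$ at $t = -1$ and exact order $d_{\infty}$ at $t = 1$ for every admissible datum, so $q \mid P$ as polynomials and $R := P/q$ is a polynomial with $R(\pm 1) \neq 0$ whose coefficients depend polynomially on $\{x_a\}$. Since $q > 0$ on $(-1,1)$, there $\log|P| = \log q + \log|R|$ and
\[
(\log|P|)'' = -\frac{d_0}{(1+t)^2} - \frac{d_{\infty}}{(1-t)^2} + (\log|R|)''.
\]
As the first two terms are $\le 0$, it suffices to prove $(\log|R|)'' < 0$ on $(-1,1)$, that is, positivity of the polynomial $\widetilde{Q} := (R')^2 - R\,R''$.

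Next I would compute the limit of $R$ as $|x_a| \to 0$ for all $a \in \mathcal A$. Then $\rho \to 1$, the base curvature contributions vanish, and $P$ converges to the polynomial $P_0$ determined by $P_0(\pm 1) = \mp 2 q(\pm 1)$ together with $P_0' = 2(\sigma - r)q$, where $\sigma$ is the surviving fiber curvature sum and $r = \lim \beta_0/\alpha_0$. A partial-fraction computation shows that $\Pi(t) := (C - 2\lambda t)\,q(t)$, with $C = d_0 - d_{\infty}$ and $2\lambda = d_0 + d_{\infty} + 2$ as in Theorem~\ref{the:4.2}, satisfies $\Pi' = 2\bigl(\sigma - \lambda(1 + d_0 + d_{\infty})\bigr)q$ and $\Pi(\pm 1) = \mp 2 q(\pm 1)$. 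Because the constant in such a first-order characterization is pinned uniquely by the two boundary values (recall $P(\pm 1) = \mp 2 p_c(\pm 1)$ and $\int_{-1}^1 q > 0$), this forces $P_0 = \Pi$, hence $R \to C - 2\lambda t$.

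Finally, $R$, $R'$ and $R''$ are polynomials of bounded degree whose coefficients converge to those of the linear function $C - 2\lambda t$, so they converge uniformly on $[-1,1]$; consequently $\widetilde{Q} \to (2\lambda)^2 = 4\lambda^2 > 0$ uniformly there. Thus $\widetilde{Q} > 0$ on all of $[-1,1]$ once $|x_a|$ is small enough for every $a \in \mathcal A$, which yields $(\log|R|)'' < 0$ and therefore $(\log|P|)'' < 0$ off the zero-set of $P$. The main obstacle, and the reason the factorization $P = qR$ is essential, is exactly the endpoint behavior: unlike $(\log|P|)''$, the reduced quantity $\widetilde{Q}$ extends as a polynomial whose limit is the nonvanishing constant $4\lambda^2$, so the elementary uniform-continuity argument applies even at $z = \pm 1$, where the high-order zeros of $P$ would otherwise ruin any naive perturbation estimate.
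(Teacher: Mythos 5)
Your proof is correct, but it takes a genuinely different route from the paper's. The paper works directly with $Q := P''P-(P')^2$: it computes $\lim Q<0$ on $(-1,1)$ from the explicit limit $\lim P=-(2+d_0+d_\infty)(t-t_0)(1+t)^{d_0}(1-t)^{d_\infty}$, and then, because $Q$ may vanish at $t=\pm1$ (precisely when $d_0\ge 2$ or $d_\infty\ge 2$), it supplements the interior uniform-convergence argument with a three-case analysis of the boundary behavior ($d_0=0$, $d_0=1$, $d_0\ge 2$, and symmetrically at $t=1$), the last case being only sketched (``$t=-1$ is a zero point of $P''P-(P')^2$ fixed as $x_a$ changes''). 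Your factorization $P=qR$ with $q=(1+t)^{d_0}(1-t)^{d_\infty}$, justified by the exact vanishing orders in Lemma~\ref{lem:4.4}, removes the boundary degeneracy at the outset: the term $(\log q)''\le 0$ comes for free, and the reduced polynomial $(R')^2-RR''$ converges uniformly on all of $[-1,1]$ to the strictly positive constant $4\lambda^2$, so a single uniform-convergence step closes the argument with no case analysis; in effect your approach supplies the detail that the paper's Case 3 leaves implicit. Two minor remarks: the coefficients of $P$ (hence of $R$) are rational, not polynomial, functions of $\{x_a\}$ because of the ratio $\beta_0/\alpha_0$, but since $\alpha_0\to 2>0$ they are continuous at $x_a=0$, which is all you actually use; and your identification $\lim P=(C-2\lambda t)q$ agrees with the form $-(2+d_0+d_\infty)(t-t_0)q$ that the paper quotes from \cite{MT11}, with $t_0=C/2\lambda$ as in Theorem~\ref{the:4.2} --- though for your conclusion only the nonvanishing of the limiting slope $-2\lambda$ matters.
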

\begin{proof}
We denote the limit $x_a \rightarrow 0$ for all $a \in {\mathcal A}$ by $\lim$ for simplicity. We remark that $\lim$ and the derivatives of arbitrary order for $P$ are commutative because the $i$-th derivative $P^{(i)}$ converges uniformly on any closed interval in ${\mathbb R}$ for all $i \geq 1$. From the argument in \cite[Section 5]{MT11}, we can write $\lim P(t)$ as
\[
\lim P(t) = - (2+ d_0 + d_{\infty})(t - t_0) (1+t)^{d_0} (1-t)^{d_{\infty}}
\]
for some $t_0 \in (-1,1)$.
This is a product of polynomials of first order, so we obtain
\[
\frac{(\lim P)'' \cdot \lim P - \{ ( \lim P)' \}^2}{( \lim P)^2} = ( \log| \lim P|)'' < 0
\]
on $(-1, t_0) \cup (t_0, 1)$. Moreover, $ \lim P (t_0) = 0$ and $\lim P' (t_0) < 0$ yield $( \lim P)'' \cdot \lim P - \{ ( \lim P)' \}^2 < 0$ at $t=t_0$. Hence we get $\lim (P'' P -(P')^2) = (\lim P)'' \cdot \lim P - \{ ( \lim P)' \}^2 < 0$ on $(-1,1)$.

We want to show that $P'' P -(P')^2 < 0$ on $(-1,1)$ if $|x_a|$ is sufficiently small for all $a \in {\mathcal A}$. To do this, we observe the behavior of the function $ P'' P -(P')^2$ near the boundary as $x_a \rightarrow 0$ for all $a \in {\mathcal A}$.
\vspace{0.3 cm}

\noindent
Case 1 : $d_0 = 0$

In this case, $\lim P(t)$ has the form
\[
\lim P(t) = - (2+ d_{\infty})(t - t_0) (1-t)^{d_{\infty}} .
\]
From the boundary condition $\lim P(-1) = 2^{d_{\infty} +1}$, $t_0$ is determined by the equation $(2+d_{\infty})(1+t_0) = 2$. Hence the direct computation shows that
\[
\lim (P'' P -(P')^2) = - (1 + d_{\infty}) (4 + d_{\infty}) 2^{2 d_{\infty}} < 0
\]
at $t=-1$. Thus $P'' P -(P')^2$ is negative near $t=-1$ if $|x_a|$ is sufficiently small for all $a \in {\mathcal A}$.
\vspace{0.3 cm}

\noindent
Case 2 : $d_0 = 1$

In this case, we have $\lim P(-1) = 0$ and $\lim P' (-1) > 0 $. Hence $\lim (P'' P -(P')^2)$ is negative at $t=-1$.
This implies that $P'' P -(P')^2$ is negative near $t=-1$ if $|x_a|$ is sufficiently small for all $a \in {\mathcal A}$.
\vspace{0.3 cm}

\noindent
Case 3 : $d_0 \geq 2$

In this case, we have $\lim (P'' P -(P')^2) = 0$ at $t=-1$. However, we can see that $P'' P -(P')^2$ is negative in some (deleted) right neighborhood of $t=-1$ if $|x_a|$ is sufficiently small for all $a \in {\mathcal A}$ because $t=-1$ is a zero point of $P'' P -(P')^2$ fixed as $x_a$ changes.
\vspace{0.3 cm}

A similar observation for $ P'' P -(P')^2 $ near $t=1$ follows in the similar way. As above, we conclude that $ P'' P -(P')^2 $ is negative on $(-1,1)$ if $|x_a|$ is sufficiently small for all $a \in {\mathcal A}$ and this completes the proof of Lemma~\ref{lem:5.5}. 
\end{proof}
From the above, we conclude that
\begin{theorem}
\label{the:5.2}
Let $M$ be an $m \left( := \sum_{a \in \hat{\mathcal A}} d_a +1 \right)$-dimensional admissible bundle and $\Omega$ an admissible class on $M$ with the admissible data $\{x_a\}$. We assume that $P(t)$ has exactly one root in the interval $(-1,1)$. Then for any symplectic form defined by \eqref{eq:3.3}, the modified K{\"a}hler-Ricci flow \eqref{eq:5.11} can be reduced to the evolution equation \eqref{eq:5.15} for $\varphi_t$. Moreover, if $|x_a|$ is sufficiently small for all $a \in {\mathcal A}$, any convergent solution $\varphi_t$ of \eqref{eq:5.15} decays to $0$ in exponential order.
\end{theorem}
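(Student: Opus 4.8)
The plan is to assemble the theorem from the machinery already in place, treating its two assertions separately. For the reduction statement I would simply invoke Theorem~\ref{the:5.1}, which already reduces the modified K\"ahler-Ricci flow \eqref{eq:5.11} to the equation \eqref{eq:5.14} for $\Theta_t \in {\mathcal K}_\omega^{\rm adm}$. To put this into the stated normalized form \eqref{eq:5.15}, I would use the admissible GQE metric $\Theta_\infty$ and the constant $k_0$ with ${\rm {\bf MT}}(k_0)=0$ supplied by Lemma~\ref{lem:5.1} (available because $P(t)$ has exactly one root in $(-1,1)$), set $\Theta_t = (1+\varphi_t)\Theta_\infty$, hence $F_t = (1+\varphi_t)F_\infty$, substitute into \eqref{eq:5.14}, and eliminate $P$ using the GQE identity $F_\infty' + k_0 F_\infty = P$. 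This is a direct computation yielding exactly \eqref{eq:5.15}; the smoothness of $P/p_c$ on $[-1,1]$ ensures the equation is a genuine (degenerate) heat equation in one space variable.

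For the convergence statement I would chain the available criteria. By Guan \cite{Guan07}, any convergent solution of \eqref{eq:5.15} decays to $0$ in exponential order once the sign condition \eqref{eq:5.16} is satisfied, so the whole matter reduces to verifying \eqref{eq:5.16} under the smallness hypothesis on $\{x_a\}$. Here I would apply Lemma~\ref{lem:5.5} to obtain $(\log|P|)'' < 0$ on the complement of the zero-set of $P$ in $(-1,1)$ for $|x_a|$ sufficiently small, and then feed this into Lemma~\ref{lem:5.4}, whose conclusion is precisely \eqref{eq:5.16}. I would also record that the one-root hypothesis required to invoke Lemmas~\ref{lem:5.1} and \ref{lem:5.4} is automatically in force for small $\{x_a\}$, so no independent assumption is needed in this regime.

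Since the two lemmas do the real work, the theorem itself is a synthesis and its proof is short; the genuine obstacle lives one level down, inside Lemma~\ref{lem:5.5}. The delicate step there is controlling $P''P-(P')^2$ near the endpoints $z=\pm 1$: when $d_0 \ge 2$ (or $d_\infty \ge 2$) the limit as $x_a \to 0$ vanishes at the corresponding endpoint, so one cannot conclude negativity by passing to the limit but must instead use that this endpoint is a fixed zero of $P''P-(P')^2$ and argue negativity in a one-sided neighborhood. Granting these endpoint estimates (and the differential-inequality argument of Lemma~\ref{lem:5.4} modeled on \cite{Koi90}), combining Theorem~\ref{the:5.1}, Lemmas~\ref{lem:5.4} and \ref{lem:5.5}, and Guan's decay result completes the proof.
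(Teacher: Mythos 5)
Your proposal is correct and follows essentially the same route as the paper: Theorem~\ref{the:5.1} plus the substitution $\Theta_t=(1+\varphi_t)\Theta_\infty$ and the identity $F_\infty'+k_0F_\infty=P$ give \eqref{eq:5.15}, and the decay statement is exactly the chain Guan's criterion $\Leftarrow$ \eqref{eq:5.16} $\Leftarrow$ Lemma~\ref{lem:5.4} $\Leftarrow$ Lemma~\ref{lem:5.5}, which is how the paper assembles the theorem ("From the above, we conclude that"). Your identification of the endpoint analysis in Lemma~\ref{lem:5.5} (especially the $d_0\ge 2$ case) as the genuinely delicate step also matches the paper.
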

By Theorem~\ref{the:5.2} and the definition of $\varphi_t$, we see that $\Theta_t$ converges uniformly to $\Theta_{\infty}$ in exponential order. Here we remark that the convergence of the function $\varphi_t$ dose not directly indicate the convergence of the metric $g_t$ in $C^{\infty}$-topology. In order to get the uniform estimates for the higher order derivatives of $g_t$, we need additional argument which substitutes for Cao's estimate for complex Monge-Amp{\`e}re equation (cf. \cite{Cao85}).

\end{document}